\def\?[#1]{\textbf{[#1]}\marginpar{\Large{\textbf{??}}}}
\newtheorem{theo}{Theorem}
\newtheorem{prop}{Proposition}[section]
\newtheorem{lem}[prop]{Lemma}
\theoremstyle{remark}
\newtheorem{rem}{Remark}
\numberwithin{equation}{section}
\DeclareMathOperator{\supp}{supp}
\newcommand{\DOM}{\mathcal{D}}
\newcommand{\BINT}{{B(0,R_0)}}
\newcommand{\Bext}{{\mathbb{R}^n\backslash B(0,R_0)}}
\renewcommand{\Im}{\operatorname{Im}}
\renewcommand{\Re}{\operatorname{Re}}
\title[From quasimodes to resonances]
{From quasimodes to resonances: exponentially decaying perturbations.}
\author{Oran Gannot}
\email{ogannot@math.berkeley.edu}
\address{Department of Mathematics, Evans Hall, University of California,
Berkeley, CA 94720, USA}
\begin{document}

\begin{abstract}
We consider self-adjoint operators of black-box type which are exponentially close to the free Laplacian near infinity, and prove an exponential bound for the resolvent in a strip away from resonances. Here the resonances are defined as poles of the meromorphic continuation of the resolvent between appropriate exponentially weighted spaces. We then use a local version of the maximum principle to prove that any cluster of real quasimodes generates at least as many resonances, with multiplicity, rapidly converging to the quasimodes. 
\end{abstract}

\maketitle


\section{Introduction}

It is expected that for open systems, trapping of classical trajectories produces scattering resonances close to the real axis; this is often referred to as the Lax-Phillips  conjecture, see \cite[Section V.3]{Lax:1989}. When trapping is weak, for instance in the sense of hyperbolicity, the general conjecture is not true as shown by Ikawa \cite{Ikawa:1982}. For an account of recent results about resonances near the real axis under weak trapping, see the review by Wunsch \cite{Wunsch:2012}. On the other hand, when the trapping is sufficiently strong so that a construction of real quasimodes is possible, the works of Stefanov--Vodev \cite{Stefanov:1996}, Tang--Zworski \cite{Tang:1998}, and Stefanov \cite{Stefanov:1999} show that there exist resonances close to the quasimodes. These results were established in the setting of compactly supported perturbations, or more generally for perturbations which are dilation analytic near infinity \cite{Sjostrand:1991}, \cite{Sjostrand:1997}.
   
Complementary to the aforementioned results, in this note we prove analogues for `black box' operators which are exponentially close to the free Laplacian at infinity. More precisely, we allow both metric and potential perturbations of the Laplacian outside a compact set (the black box), but require only minimal assumptions on the operator in the black box. Standard techniques give a meromorphic continuation of the exponentially weighted resolvent through the real axis to a strip whose width is of size $O(h)$; the choice of exponential weight and the width of the strip depend on the decay rate of the perturbation. We then apply a complex analytic framework, summarized for example in \cite{Petkov:2001}, to deduce an exponential a priori bound on the weighted resolvent away from resonances. 

A typical application of such an exponential bound (well-established in \cite{Stefanov:1996}, \cite{Tang:1998}, \cite{Stefanov:1999}, \cite{Stefanov:2005}) is to show that any family of sufficiently independent quasimodes generates at least as many resonances, counting multiplicity; these resonances converge rapidly not only to the real axis, but to the quasimodes --- see Theorem \ref{thm:existenceofres} for a precise statement. The general assumptions are presented in Section \ref{sect:blackboxmodel}.   

One motivation for this work comes from a recent investigation of resonances for Schwarzschild--AdS black holes, where quasimodes have been constructed \cite{Gannot:2014}, \cite{Holzegel:2013}. Due to the spherical symmetry in that setting, the stationary wave operator $P$ decomposes as a sum of one-dimensional operators $P_\ell$ on a half-line, which are just restrictions to spaces of spherical harmonics with angular momentum $\ell$. Each $P_\ell$ is a self-adjoint perturbation of the Laplacian by an exponentially decaying potential which is singular near the origin --- the results of this paper imply that the resolvent $R_\ell(\sigma)$ of $P_\ell$ has a meromorphic continuation through the real axis. Although meromorphy of each one-dimensional resolvent does not imply meromorphy for the global resolvent (this requires uniform control as $\ell \rightarrow \infty$ and has recently been established in the Schwarzschild--AdS setting by Warnick \cite{Warnick:2013}), the results of this paper do imply the existence of a sequence of poles $\sigma_\ell$ for $R_\ell(\sigma)$ satisfying
\[
0 < -\Im \sigma_\ell < C e^{-\ell/C} \text{ for $\ell$ sufficiently large}.
\] 
We also remark that in the Schwarzschild--AdS case the effective potential is dilation analytic, so the results of \cite{Sjostrand:1997} indeed apply. One advantage to the approach taken here is that the exponential decay of the potential remains stable under small (radial, static) perturbations of the Schwarzschild--AdS metric.

\subsection{Free resolvent}

We begin by gathering several results about the free resolvent. The Laplacian $-\Delta$ on $\mathbb{R}^n$ with domain $H^2(\mathbb{R}^n)$ is self-adjoint and we denote by $R_0(\sigma)$ the free resolvent
\[
R_0(\sigma) = (-\Delta - \sigma^2)^{-1} : L^2(\mathbb{R}^n) \rightarrow H^2(\mathbb{R}^n), \, \Im\sigma >0.
\]

Choose $\varphi\in C^\infty(\mathbb{R}^n)$ with the property that $\varphi(x) = |x|$ for $|x|$ large enough. If $\mathcal{A}$ denotes some function space, we will use the notation $\mathcal{A}_\gamma = e^{-\gamma\varphi}\mathcal{A}$ for its weighted counterpart. We will also freely move between the equivalent notions
\[
T: \mathcal{A}_\alpha \rightarrow \mathcal{B}_\beta \Longleftrightarrow e^{\beta\varphi}Te^{-\alpha\varphi}: \mathcal{A} \rightarrow \mathcal{B},
\]
depending on convenience. 

Our starting point is the well known fact \cite{McLeod:1967} that for each $\gamma>0$ the weighted resolvent 
\[
e^{-\gamma \varphi} R_0(\sigma) e^{-\gamma \varphi} : L^2(\mathbb{R}^n) \rightarrow L^2(\mathbb{R}^n)
\]
extends holomorphically across $\Re \sigma > 0$ as a bounded operator to the strip $\Im \sigma > -\gamma$, with the usual caveats in even dimensions when winding around the origin. We also have the standard representation,
\begin{equation} \label{eq:spectralmeasure}
e^{-\gamma \varphi} R_0(\sigma) e^{-\gamma \varphi} = e^{-\gamma \varphi} R_0(-\sigma) e^{-\gamma \varphi} +  \sigma^{n-2} e^{-\gamma \varphi} M(\sigma)e^{-\gamma \varphi}
\end{equation}
whenever $\Re \sigma >0 $ and $-\gamma < \Im\sigma < 0$. Here $M(\sigma)$ is the operator with kernel
\[
M(\sigma,x,y) = (i/2)(2\pi)^{-n+1} \int_{\mathbb{S}^{n-1}} e^{i\sigma\left<\omega,x-y\right>}d\omega.
\]
We can also write
\begin{equation} \label{eq:MandPhi}
M(\sigma) = (i/2)(2\pi)^{-n+1} \Phi^t(\sigma) \Phi(-\sigma),
\end{equation}
where $\Phi(\sigma) : L^2(\mathbb{R}^n) \rightarrow L^2(\mathbb{S}^{n-1})$ has kernel $\Phi(\sigma,\omega,x) = e^{i\sigma\left<\omega,x\right>}$ and $\Phi^t: L^2(\mathbb{S}^{n-1}) \rightarrow L^2(\mathbb{R}^n)$ has the transposed kernel.

The following two lemmas establish standard polynomial bounds for the free resolvent in the case of exponential weights.

\begin{lem} \label{lem:Mdecay} For each $\epsilon > 0$ there exists $C = C(\epsilon)>0$ such that whenever $| \Im \sigma | < \gamma - \epsilon$ and $\Re \sigma \geq 1$,
\[
\| e^{-\gamma \varphi} M(\sigma) e^{-\gamma \varphi} \|_{L^2(\mathbb{R}^n) \rightarrow L^2(\mathbb{R}^n)} < C |\sigma|^{1-n}.
\]
\end{lem}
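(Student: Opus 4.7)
The plan is to exploit the factorization $M(\sigma) = (i/2)(2\pi)^{-n+1}\Phi^t(\sigma)\Phi(-\sigma)$ from~\eqref{eq:MandPhi}, reducing the norm estimate for $M$ to bounds on the two factors $e^{-\gamma\varphi}\Phi^t(\sigma)$ and $\Phi(-\sigma)e^{-\gamma\varphi}$. Since $\Phi^t(\sigma)$ has kernel $e^{i\sigma\langle\omega,x\rangle}$, its adjoint satisfies $(\Phi^t(\sigma))^* = \Phi(-\bar\sigma)$, and as the hypothesis on $\sigma$ is preserved under $\sigma\mapsto\bar\sigma$, the two factors are essentially interchangeable under taking adjoints. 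It therefore suffices to prove the single estimate
\[
\|\Phi(-\sigma)e^{-\gamma\varphi}\|_{L^2(\mathbb{R}^n)\to L^2(\mathbb{S}^{n-1})} \leq C|\sigma|^{(1-n)/2}
\]
uniformly for $|\Im\sigma|<\gamma-\epsilon$ and $\Re\sigma\geq 1$.

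To establish this, I would set $T = \Phi(-\sigma)e^{-\gamma\varphi}$ and run the $TT^*$ argument. Writing $\sigma = a+ib$ with $a=\Re\sigma\geq 1$ and $|b|\leq\gamma-\epsilon$, a direct calculation gives the Schwartz kernel of $TT^*$ on $\mathbb{S}^{n-1}\times\mathbb{S}^{n-1}$ as
\[
K(\omega,\omega') = \int_{\mathbb{R}^n} e^{-ia\langle\omega-\omega',x\rangle + b\langle\omega+\omega',x\rangle}\,e^{-2\gamma\varphi(x)}\,dx,
\]
which is precisely the Fourier transform of $F_{\omega,\omega'}(x) := e^{b\langle\omega+\omega',x\rangle}e^{-2\gamma\varphi(x)}$ evaluated at $a(\omega-\omega')$.

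The crucial point is that because $|b|\,|\omega+\omega'|\leq 2(\gamma-\epsilon)$ while $\varphi(x)\geq |x|-C_0$ outside a compact set, the functions $F_{\omega,\omega'}$ and all of their derivatives are bounded by $C_\alpha e^{-2\epsilon|x|}$ with constants uniform in $b$, $\omega$, and $\omega'$. Hence $F_{\omega,\omega'}\in\mathcal{S}(\mathbb{R}^n)$ with uniformly bounded Schwartz seminorms, and its Fourier transform decays faster than any polynomial, giving
\[
|K(\omega,\omega')| \leq C_N(1+|\sigma|\,|\omega-\omega'|)^{-N}
\]
for every $N$, where I have used that $|a|$ is comparable to $|\sigma|$ in the relevant region.

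The final step is Schur's test on $\mathbb{S}^{n-1}$: in geodesic polar coordinates about a fixed point the surface element behaves like $\theta^{n-2}\,d\theta$, so for $N>n-1$ one obtains
\[
\sup_\omega \int_{\mathbb{S}^{n-1}}(1+|\sigma|\,|\omega-\omega'|)^{-N}\,d\omega' \leq C|\sigma|^{1-n}
\]
and the same bound with the roles of $\omega$ and $\omega'$ reversed. This yields $\|TT^*\|\leq C|\sigma|^{1-n}$, hence $\|T\|\leq C|\sigma|^{(1-n)/2}$, which combined with the analogous bound for the other factor gives the lemma. The only delicate point is the uniformity of the Schwartz seminorm bounds on $F_{\omega,\omega'}$; here the strict inequality $|\Im\sigma|<\gamma-\epsilon$ is essential, since it guarantees exponential decay with a rate $2\epsilon$ independent of $b$ and $\omega+\omega'$ — precisely the slack needed to absorb the growth introduced by the factor $e^{b\langle\omega+\omega',x\rangle}$.
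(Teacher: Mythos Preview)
Your proof is correct and takes a genuinely different route from the paper's. Both arguments begin with the factorization \eqref{eq:MandPhi} and reduce to the bound $\|\Phi(\pm\sigma)e^{-\gamma\varphi}\|_{L^2(\mathbb{R}^n)\to L^2(\mathbb{S}^{n-1})}\leq C|\sigma|^{(1-n)/2}$, but they diverge from there. The paper composes with the Fourier transform to obtain an operator with kernel $\mathcal{F}(e^{-\gamma\varphi})(\sigma\omega-\xi)$ on $\mathbb{S}^{n-1}\times\mathbb{R}^n$, then applies Schur's test asymmetrically; this forces a geometric decomposition of the sphere into a small cap $U$ (where $\Re\sigma$ and $|\xi|$ are comparable) and its complement, followed by an explicit parametrization of the hemisphere to control the integral over $U$. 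Your $TT^*$ argument instead produces a symmetric kernel on $\mathbb{S}^{n-1}\times\mathbb{S}^{n-1}$, namely the Fourier transform of a function with uniformly bounded Schwartz seminorms evaluated at $(\Re\sigma)(\omega-\omega')$; the decay $|K(\omega,\omega')|\leq C_N(1+|\sigma|\,|\omega-\omega'|)^{-N}$ is then immediate, and Schur on the sphere reduces to a one-line polar-coordinate computation. Your approach is shorter and avoids the geometric case analysis entirely; the paper's approach, on the other hand, works directly with a single weight $e^{-\gamma\varphi}$ rather than $e^{-2\gamma\varphi}$, which makes the analyticity of $\mathcal{F}(e^{-\gamma\varphi})$ in the strip $|\Im\xi|<\gamma-\epsilon$ the key input and keeps the argument closer in spirit to the restriction-theorem literature (as in the cited Burq reference).
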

\begin{proof}
The proof is adapted from \cite{Burq:2002}. First note that the Fourier transform $\mathcal{F}(e^{-\gamma\varphi})(\xi)$ extends holomorphically to the strip $\{ \xi \in \mathbb{C}^n: |\Im \xi| < \gamma -\epsilon \}$ and 
\begin{equation} \label{eq:fourier}
|\mathcal{F}(e^{-\gamma\varphi})(\xi)| < C_N \left < \xi \right>^{-N}
\end{equation}
in the strip for each $N$. In light of \eqref{eq:spectralmeasure}, \eqref{eq:MandPhi} it suffices to prove that 
\[
 \| \Phi(\sigma) e^{-\gamma\varphi} \|_{L^2(\mathbb{R}^n) \rightarrow L^2(\mathbb{S}^{n-1})}  < C |\sigma|^{(1-n)/2},
\]
which by Plancherel's theorem is equivalent to the same estimate for the composition $\left(\Phi(\sigma) e^{-\gamma\varphi}\right)\circ \mathcal{F}$. The operator $\left(\Phi(\sigma) e^{-\gamma\varphi}\right)\circ \mathcal{F}$ has kernel $\mathcal{F}(e^{-\gamma\varphi})(\sigma \omega -\xi)$. By Schur's lemma it suffices to obtain an estimate of the form
\[
\sup_{\xi\in \mathbb{R}^n} \int_{\mathbb{S}^{n-1}} |\mathcal{F}(e^{-\gamma\varphi})(\sigma \omega -\xi)| d\omega < C |\sigma|^{1-n},
\]
since in the other direction we may use \eqref{eq:fourier} to obtain the trivial estimate
\[
\sup_{\omega\in \mathbb{S}^{n-1}} \int_{\mathbb{R}^{n}} |\mathcal{F}(e^{-\gamma\varphi})(\sigma \omega -\xi)| d\xi < C.
\]
Write $\xi$ as $\xi = \left<\xi,\omega\right> \omega + \xi^\perp(\omega)$ where $\left< \xi^\perp(\omega),\omega\right>= 0$. Then by \eqref{eq:fourier} we are left estimating
\[
\int_{\mathbb{S}^{n-1}} (1+ |\left<\xi, \omega \right> - \Re \sigma| + |\xi^{\perp}(\omega)|)^{-N} d\omega.
\]
Fix $\xi \in \mathbb{R}^n$ and $\delta>0$, and decompose the sphere into two sets, 
\[
U = \{ \omega\in \mathbb{S}^{n-1}: |\left<\xi, \omega \right> - \Re \sigma| < \delta \Re \sigma,\; |\xi^{\perp}(\omega)| < \delta \Re \sigma \}
\]
and its complement $U^c$. The integral over $U^c$ is of the order $O(|\Re \sigma|^{-\infty})$ so it suffices to examine the integral over $U$. 

Observe that unless $\Re \sigma$ is comparable to $|\xi|$, the set $U$ is empty. Indeed, if $\omega\in U$ then $(1-\delta)\Re\sigma < \left<\xi, \omega \right>  < (1+\delta)\Re\sigma$. Hence 
\[
\frac{\Re \sigma}{2} < (1-\delta)\Re\sigma < \left<\xi, \omega \right>  \leq |\xi|,
\]
while on the other hand 
\[
|\xi|^2 = |\left<\xi, \omega \right> |^2 + |\xi^\perp(\omega)|^2 < 3(\Re\sigma)^2
\] 
for $\delta$ sufficiently small.

Write a typical point of $\mathbb{R}^n$ as $(y,y')$ where $y \in \mathbb{R}^{n-1}$ and $y' \in \mathbb{R}$. By a rotation we may assume that $\xi = (0,|\xi|)$. In that case $U$ is contained in the upper hemisphere, in a cap around $|\xi|^{-1}\xi = (0,1)$ whose size is independent of $\xi$. This is true since $\omega \in U$ implies
\[
\left< |\xi|^{-1} \xi,\omega \right> > \frac{1}{2\sqrt{3}} > 0.
\]

We then parametrize the upper hemisphere $\mathbb{S}^{n-1}_+$ (which contains $\xi$) by the diffeomorphism
\[
p: \mathbb{R}^{n-1} \rightarrow \mathbb{S}^{n-1}_+, \quad y \mapsto \frac{(y,|\xi|)}{|(y,|\xi|)|}.
\]
Whenever $y \in p^{-1}(U)$ we have 
\[
|\xi^{\perp}(p(y))| \geq  |y|.
\] 
To see this, compute
\[
|\xi^{\perp}(p(y))|^2 = |\xi|^2 - |\left<\xi,p(y)\right>|^2 = |\xi|^2 - \frac{|\xi|^4}{|y|^2+|\xi|^2} = |y|^2 \frac{|\xi|^2}{|y|^2+|\xi|^2} \geq |y|^2. 
\]
Furthermore, the Jacobian satisfies
\[
| \partial p / \partial y | = O(|\xi|^{1-n}).
\]
We can now bound the integral over $U$ by
\begin{align*}
\int_U (1 + |\xi^{\perp}(\omega)|)^{-N} d\omega &= \int_{p^{-1}(U)} (1+ |\xi^\perp(p(y))|)^{-N}\, |\partial p /\partial y | dy \\
&\leq C_1 |\Re\sigma|^{1-n} \int_{\mathbb{R}^{n-1}} (1+|y|)^{-N} dy \leq C_2 |\Re\sigma|^{1-n}
\end{align*}
for $N$ large enough. In the final step we used that $|\xi|$ and $\Re\sigma$ were comparable.

\end{proof}

\begin{lem} \label{lem:freeresolventbounds}
For each $\epsilon > 0$ and $|\alpha|\leq 2$ there exists $C_\alpha = C_\alpha(\gamma,\epsilon)$ such that whenever $\Im \sigma > -\gamma + \epsilon$ and $\Re \sigma \geq 1$,      
\[
\| D^\alpha \left(e^{-\gamma\varphi}R_0(\sigma)e^{-\gamma\varphi}\right)\|_{L^2 \rightarrow L^2} \leq C_\alpha |\sigma|^{|\alpha|-1}.
\]
\end{lem}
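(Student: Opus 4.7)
The plan is to split the admissible region into the physical side $\Im\sigma \ge 0$ and the continued side $-\gamma+\epsilon < \Im\sigma < 0$, using \eqref{eq:spectralmeasure} to reduce the latter to the former plus a controlled $M$-correction.

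On the physical side, I would analyse $D^\alpha(e^{-\gamma\varphi} R_0(\sigma) e^{-\gamma\varphi})$ by adapting the Schur/Fourier argument of Lemma \ref{lem:Mdecay}. Expanding $D^\alpha$ on the kernel via Leibniz, derivatives on the outer factors $e^{-\gamma\varphi}$ give uniformly bounded multipliers, while derivatives on the resolvent kernel multiply the Fourier symbol by $\xi^\beta$ for some $|\beta|\le|\alpha|$. After conjugating by the Fourier transform, $e^{-\gamma\varphi} D^\beta R_0(\sigma) e^{-\gamma\varphi}$ becomes, up to a constant, an integral operator with kernel
$$K_\beta(\xi,\eta) = \int \mathcal{F}(e^{-\gamma\varphi})(\xi - \zeta)\, \zeta^\beta(|\zeta|^2 - \sigma^2)^{-1}\, \mathcal{F}(e^{-\gamma\varphi})(\zeta-\eta)\, d\zeta.$$
Schur's test, combined with the rapid decay \eqref{eq:fourier} and the codimension-one (hence locally integrable) singularity of the symbol on $|\zeta|^2 = \sigma^2$, yields $\|e^{-\gamma\varphi} D^\beta R_0(\sigma) e^{-\gamma\varphi}\|_{L^2\to L^2} \le C|\sigma|^{|\beta|-1}$ uniformly down to $\Im\sigma = 0^+$. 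The localisation and parametrisation on the critical sphere $|\zeta| = \Re\sigma$ are essentially identical to those in Lemma \ref{lem:Mdecay}.

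For $-\gamma+\epsilon < \Im\sigma < 0$, apply \eqref{eq:spectralmeasure}:
$$ e^{-\gamma\varphi} R_0(\sigma) e^{-\gamma\varphi} = e^{-\gamma\varphi} R_0(-\sigma) e^{-\gamma\varphi} + \sigma^{n-2} e^{-\gamma\varphi} M(\sigma) e^{-\gamma\varphi}.$$
Since $\Im(-\sigma) > 0$ and $|-\sigma| = |\sigma|$, the first summand is controlled by the physical-side estimate applied at $-\sigma$. For the second, distribute $D^\alpha$ by Leibniz: derivatives landing on the outer weights produce bounded multipliers, and derivatives in $x$ falling on the kernel of $M(\sigma)$ bring down factors $(i\sigma\omega)^\beta$ of size at most $|\sigma|^{|\beta|}$. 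The resulting operators have the same structure as $M(\sigma)$ with the harmless extra angular weight $\omega^\beta$ in the integrand, and the proof of Lemma \ref{lem:Mdecay} carries over verbatim to give an $L^2\to L^2$ bound of $O(|\sigma|^{|\beta|+1-n})$. Multiplying by the $\sigma^{n-2}$ prefactor and summing over $\beta \le \alpha$ yields $O(|\sigma|^{|\alpha|-1})$, as required.

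The chief obstacle is the uniform physical-side bound as $\Im\sigma \to 0^+$: the unweighted Fourier multiplier $\zeta^\beta(|\zeta|^2-\sigma^2)^{-1}$ blows up on the critical sphere $|\zeta| = \sigma$, and the only way to rescue a polynomial bound in $|\sigma|$ is to exploit the smoothing effect of the weights, encoded by the convolution with the Schwartz-class $\mathcal{F}(e^{-\gamma\varphi})$. Once this key estimate is organised as in Lemma \ref{lem:Mdecay}, the bookkeeping of the extra $|\sigma|^{|\beta|}$ factors from $D^\alpha$ is straightforward.
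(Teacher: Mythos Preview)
Your reduction of the continued strip to the physical side via \eqref{eq:spectralmeasure}, and your treatment of the $M$-term by inserting $(i\sigma\omega)^\beta$ into the kernel and rerunning Lemma~\ref{lem:Mdecay}, are both fine. The problem is the core physical-side estimate.

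The claim that $(|\zeta|^2-\sigma^2)^{-1}$ has a ``codimension-one (hence locally integrable)'' singularity is false. As $\Im\sigma\to 0^+$ the denominator vanishes to first order on the sphere $|\zeta|=\Re\sigma$; in the normal variable $r=|\zeta|$ one has $||\zeta|^2-\sigma^2|\sim 2(\Re\sigma)\,|r-\Re\sigma|$, and $1/|r-\Re\sigma|$ is \emph{not} integrable across the sphere. A simple pole on a hypersurface is never locally $L^1$. Consequently Schur's test applied to the absolute value of your kernel $K_\beta$ diverges (logarithmically) as $\Im\sigma\to 0^+$, and no amount of smoothing from the Schwartz convolution factors $\mathcal F(e^{-\gamma\varphi})$ rescues this: those factors localise in $\zeta$ but do not introduce any cancellation against the pole. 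The limiting absorption bound is genuinely an oscillatory/boundary-value statement, not an absolute-value estimate on the Fourier side.

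The paper sidesteps this by passing to the time domain: writing $e^{-\gamma\varphi}R_0(\sigma)e^{-\gamma\varphi}=\frac{i}{\sigma}\int_0^\infty e^{i\sigma t}\,e^{-\gamma\varphi}U(t)e^{-\gamma\varphi}\,dt$ with $U(t)=\cos(t\sqrt{-\Delta})$, and then using finite propagation speed together with the explicit decay of the wave kernel (strong Huygens in odd dimensions, the $t^{-n}$ bound in even dimensions) to show that $\|e^{-\gamma\varphi}U(t)e^{-\gamma\varphi}\|$ is integrable in $t$. This gives the $|\alpha|=0$ bound uniformly down to $\Im\sigma=0$. The higher derivatives are then obtained not on the Fourier side but by the resolvent identity $\Delta R_0(\sigma)e^{-\gamma\varphi}u=-e^{-\gamma\varphi}u-\sigma^2 R_0(\sigma)e^{-\gamma\varphi}u$ combined with integration by parts to control $\|e^{-\gamma\varphi}\nabla f\|$ in terms of $\|e^{-\gamma\varphi}f\|$ and $\|e^{-\gamma\varphi}\Delta f\|$. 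You could graft this derivative step onto any valid $|\alpha|=0$ argument, but your $|\alpha|=0$ argument itself needs to be replaced.
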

\begin{proof}
$(1)$ First we handle the case $|\alpha|=0$ and $n>1$; see \cite{Rauch:1978} and \cite{Vodev:1994} for similar arguments. Let $U(t) = \cos (t \sqrt{-\Delta})$ denote the propagator for the Cauchy problem
\[
\begin{cases}
(\partial_t^2 - \Delta)U(t)f(x) = 0, \; (t,x) \in \mathbb{R} \times \mathbb{R}^n, \\
U(0)f(x) = f(x), \quad \partial_t U(0)f(x) = 0.
\end{cases}     
\]
For $\Im \sigma > 0$, write the resolvent
\[ \label{eq:resolventintermsofpropagator}
e^{-\gamma \varphi}R_0(\sigma)e^{-\gamma \varphi} = \frac{i}{\sigma} \int_0^\infty e^{i\sigma t} e^{-\gamma\varphi} U(t) e^{-\gamma\varphi} dt.
\]
Let $r_0$ be such that $\varphi(x) = |x|$ for $|x| \geq r_0$. Notice that $\|U(t)\|_{L^2\rightarrow L^2} \leq 1$ and 
\[
\|1_{\{|x| \geq t/4\}} e^{-\gamma\varphi} \|_{L^2(\mathbb{R}^n)\rightarrow L^2(\mathbb{R}^n)} \leq e^{-\gamma t/4}, \quad t\geq 4r_0.
\]
Writing
\begin{align*}
U(t) &= 1_{\{|x| < t/4\}} U(t) 1_{\{|x| < t/4\}} + 1_{\{|x| \geq t/4\}}  U(t) 1_{\{|x| < t/4\}} \\
&+ 1_{\{|x| < t/4\}} U(t)  1_{\{|x| \geq t/4\}} + 1_{\{|x| \geq t/4\}}  U(t)  1_{\{|x|\geq t/4\}},  
\end{align*}
we see the norms of the latter three terms are of size $O(e^{-\gamma t/4})$ after multiplication by $e^{-\gamma\varphi}$ on the left and right.
Hence we only need to estimate the norm of the operator with kernel
\[
1_{\{|x| < t/4\}}(x) e^{-\gamma\varphi(x)} U(t,x,y) e^{-\gamma\varphi(y)} 1_{\{|x| < t/4\}}(y),
\]
using explicit knowledge of the kernel $U(t,x,y)$. 

In odd dimensions, the kernel vanishes identically by the strong Huygens principle. In even dimensions, the kernel vanishes unless $|x|, |y| < t/4$ which implies $|x-y|<t/2$ and thus 
\[
|1_{\{|x| < t/4\}}(x) U(t,x,y) 1_{\{|x| < t/4\}}(y)| \leq Ct^{-n}, 
\]
again from explicit formulae for $U(t,x,y)$. Schur's lemma then gives
\[
\| 1_{\{|x| < t/4\}} e^{-\gamma\varphi} U(t) e^{-\gamma\varphi} 1_{\{|x| < t/4\}}\|_{L^2(\mathbb{R}^n)\rightarrow L^2(\mathbb{R}^n)} \leq C t^{-n}.
\] 
Therefore we see that the integral in \eqref{eq:resolventintermsofpropagator} actually converges for $\Im \sigma \geq 0$ with the uniform estimate
\[
\| e^{-\gamma \varphi}R_0(\sigma)e^{-\gamma \varphi}\|_{L^2\rightarrow L^2} \leq C |\sigma|^{-1}, \; \Im\sigma \geq 0 \text{ and } \Re \sigma \geq  1.
\]
The result for $-\gamma + \epsilon < \Im \sigma < 0$ follows immediately by reflection from \eqref{eq:spectralmeasure} and Lemma \ref{lem:Mdecay}.

$(2)$ In the case $\alpha = 0$ and $n=1$ one can simply apply Schur's lemma to the Schwartz kernel
\[
e^{-\gamma \varphi} R_0(x,y,\sigma) e^{-\gamma \varphi} = e^{-\gamma\varphi(x)}\frac{i e^{i\sigma|x-y|}}{\sigma} e^{-\gamma\varphi(y)}.
\]

The $|\alpha|=1,2$ cases follows from the $|\alpha|=0$ case by interpolation as in \cite[Lemma 3]{Zworski:1989}; we supply a proof for the reader's convenience. Consider first the case $|\alpha|=2$. By analytic continuation, if $u\in L^2(\mathbb{R}^n)$ then
\begin{equation} \label{eq:continuedresolventidentity}
\Delta R_0(\sigma)e^{-\gamma\varphi}u = -e^{-\gamma\varphi}u - \sigma^2 R_0(\sigma)e^{-\gamma\varphi}u
\end{equation}
and hence $R_0(\sigma) : L^2_\gamma \rightarrow H^2_{-\gamma}$ is bounded for $\Im \sigma > -\gamma$. Now choose $u\in L^2(\mathbb{R}^n)$ and set $f=R_0(\sigma)e^{-\gamma \varphi} u$. Then
\begin{equation} \label{eq:laplacianofweightedresolvent}
\Delta (e^{-\gamma \varphi} R_0(\sigma) e^{-\gamma \varphi} u) = (\gamma^2 |\nabla \varphi|^2 -\gamma \Delta \varphi)e^{-\gamma \varphi} f - 2\gamma \nabla \varphi \cdot (e^{-\gamma \varphi} \nabla f) + e^{-\gamma \varphi} \Delta f
\end{equation}
In light of \eqref{eq:continuedresolventidentity} it suffices to estimate the $L^2$ norm of $-\gamma \nabla \varphi \cdot (e^{-\gamma \varphi} \nabla f)$.
But since $\varphi$ has uniformly bounded derivatives, 
\[ 
\| \nabla \varphi \cdot (e^{-\gamma \varphi} \nabla f) \|^2_{L^2} \leq C \| e^{-\gamma \varphi} \nabla f \|^2_{L^2}. 
\]
We now integrate by parts and estimate
\begin{equation} \label{eq:nablaofweightedresolvent}
\|e^{-\gamma \varphi} \nabla f\|^2_{L^2} \leq 2 \int | \gamma \nabla \varphi|\,|e^{-\gamma \varphi} \nabla f|\,|e^{-\gamma \varphi} f| \,dx + \int |e^{-\gamma \varphi} \Delta f|\,|e^{-\gamma \varphi} f|  dx. 
\end{equation}
Applying the inequality $2ab \leq 2a^2 + \frac{1}{2}b^2$ to the integrand, the first term on the right hand side is bounded by
\[
\int 2 |\gamma \nabla \varphi|^2 \, |e^{-\gamma \varphi} f|^2 \, dx + \int \frac{1}{2} \, |e^{-\gamma \varphi} \nabla f|^2 \, dx
\]
while for the second term we use \eqref{eq:continuedresolventidentity}. We conclude that 
\[
\|e^{-\gamma \varphi} \nabla f\|^2_{L^2} \leq C(1+|\sigma|^2) \| e^{-\gamma \varphi} f \|^2_{L^2} + \|e^{-2\gamma\varphi} u\|^2_{L^2}.
\]
Returning to \eqref{eq:laplacianofweightedresolvent}, it follows that
\[
\| \Delta (e^{-\gamma \varphi} R_0(\sigma) e^{-\gamma \varphi} u) \|_{L^2} \leq C(1+|\sigma|^2)\|u\|_{L^2}.
\]
Moreover, \eqref{eq:nablaofweightedresolvent} actually shows
\[
\| \nabla (e^{-\gamma \varphi} R_0(\sigma) e^{-\gamma \varphi} u) \|_{L^2} \leq C\|u\|_{L^2}.
\]
\end{proof}  

We now introduce the semiclassical rescaling by setting $\lambda = h \sigma$. Let $R_0(\sigma,h)$ denote $(-h^2\Delta-\lambda^2)^{-1}$ and its corresponding analytic continuation. We are interested in $\lambda$ lying in a set of the form
\[
(a,b) + i((-\gamma+\epsilon)h,1)
\]
where $0 < a < b$. For the remainder of the paper, equip $H^k(\mathbb{R}^n)$ with the $h$-dependent norm $\|u\|^2_{H^k} = \sum_{|\alpha|\leq k}\| (hD)^\alpha u \|^2_{L^2}$.  Since $R_0(\lambda,h) = h^{-2}R_0(\textstyle{\frac{\lambda}{h}})$, it follows that we have uniform estimates
\[
\| R_0(\lambda,h)\|_{L^2_\gamma \rightarrow H^s_{-\gamma}} = O(h^{-1}),\; s=0,1,2,
\]
for $\lambda \in (a,b) + i((-\gamma+\epsilon)h,1)$.

\subsection{Black box model} \label{sect:blackboxmodel}

As our scattering problem, we consider exponentially decaying perturbations of the Laplacian outside a compact set, formulated in the black box setting as follows. Suppose $\mathcal{H}$ is a Hilbert space with an orthogonal decomposition 
\[
\mathcal{H} = \mathcal{H}_{R_0} \oplus L^2(\mathbb{R}^n\backslash B(0,R_0))
\] 
where $B(x,R) = \left\{y\in \mathbb{R}^n: \, |x-y| < R\right\}$ and $R_0$ is fixed. The orthogonal projections onto $\mathcal{H}_{R_0}$ and $L^2(\mathbb{R}^n\backslash B(0,R_0))$ will be denoted $1_{B(0,R_0)}u = u|_{B(0,R_0)}$ and $1_{\mathbb{R}^n\backslash B(0,R)}u = u|_{\mathbb{R}^n\backslash B(0,R_0)}$ for $u\in \mathcal{H}$. Note that any bounded continuous function $\chi\in C_b(\mathbb{R}^n)$ which is constant near $B(0,R_0)$ acts naturally on $\mathcal{H}$ by
\[
\chi u = C_0 u + (\chi-C_0)1_{\mathbb{R}^n\backslash B(0,R_0)} u
\]
where $\chi \equiv C_0$ near $B(0,R_0)$.   

Now consider an unbounded self-adjoint operator $P(h)$ on $\mathcal{H}$ with domain $\DOM \subset \mathcal{H}$ (independent of $h$ for simplicity) with the following properties. The domain is required to satisfy
\begin{itemize} \itemsep4pt
\item If $u \in \DOM$, then $1_{\mathbb{R}^n\backslash B(0,R_0)}u \in H^2(\mathbb{R}^n\backslash B(0,R_0))$;
\item If $u\in H^2(\mathbb{R}^n\backslash B(0,R_0))$ vanishes near $B(0,R_0)$, then $u\in \DOM$. 
\end{itemize}

We assume there exists a real-valued and uniformly positive-definite matrix $(a_{ij})$, along with a real-valued function $V$, (which are allowed to be $h$-dependent) such that
\begin{equation} \label{eq:longrangeproperty}
(P(h)u)|_\Bext = \left(-\sum_{i,j} (h\partial_{i}) a_{ij} (h\partial_{j}) + V \right)(u|_\Bext), \, u\in \DOM.
\end{equation}
Furthermore, we require that
\[
a_{ij}(x;h) \in C_b^\infty(\Bext); \quad V(x;h) \in C_b^\infty(\Bext),
\] 
with all derivatives uniformly bounded in $h$.

The perturbation is assumed to decay exponentially to the Laplacian in the sense that there exists $\gamma>0,\, \delta>0$ so that
\begin{equation} \label{eq:potentialdecay}
|a_{ij}(x;h)-\delta_{ij}| \leq C e^{-(2\gamma+\delta) |x|}; \quad |V(x;h)| \leq C e^{-(2\gamma+\delta) |x|},\, x\in \Bext.
\end{equation}
Finally, assume that the mapping 
\begin{equation} \label{eq:blackboxcompactness}
1_{B(0,R_0)} (P(h)+i)^{-1}: \mathcal{H} \rightarrow \mathcal{H}_{R_0}
\end{equation}
is compact.


Under these hypotheses, we show that 
\[
R(\lambda,h) = (P(h)-\lambda^2)^{-1},\quad  \Re \lambda >0, \, \Im \lambda > 0
\]
admits a meromorphic continuation to the strip $\Im \lambda > (-\gamma+\epsilon)h$ as an operator $\mathcal{H}_\gamma\rightarrow \mathcal{H}_{-\gamma}$. In order that the associated weighted space $\mathcal{H}_\gamma$ makes sense, we choose $\varphi\in C^\infty(\mathbb{R}^n)$ as above satisfying $\varphi \equiv 0$ near $\BINT$.

\begin{rem} All of the results in this note also apply to black box operators on the half-line $(0,\infty)$. For the most part this amounts to replacing the Laplacian on $\mathbb{R}^n$ with the Dirichlet Laplacian on $(0,\infty)$, and replacing $H^s(\mathbb{R}^n)$ with $H^s(0,\infty)\cap H^1_0(0,\infty)$. Estimates for the free resolvent on $(0,\infty)$ follow from those on $\mathbb{R}$ by odd reflection; all other necessary modifications should be clear.
\end{rem}

\subsection{Meromorphic continuation}

As a preliminary, arbitrarily extend $a_{ij}$ and $V$ to functions defined on all of $\mathbb{R}^n$ with the same properties as their original counterparts. Since the choice of extension has no bearing on the final result, we denote them by the same letters. Now define
\begin{align*}
\widetilde{P}(h) &= -\sum_{i,j} (h\partial_{i}) a_{ij} (h\partial_{j}) + V, \\
\widetilde{R}(\lambda,h) &= (\widetilde{P}(h)-\lambda)^{-1},\; \lambda^2 \notin \sigma(\widetilde{P}(h)).
\end{align*}
Since $\widetilde{P}(h)$ is uniformly elliptic, it is self-adjoint with domain $H^2(\mathbb{R}^n)$. We will also write $A(h)$ for the difference
\[
A(h) = \widetilde{P}(h) - (-h^2\Delta).
\]
The important fact about $A(h)$ is that it is bounded as a map $H^s_\alpha \rightarrow H^{s-2}_{\alpha+2\gamma+\delta}$ for each $s,\alpha\in \mathbb{R}$.
 
We will need information about the $L^2_\gamma\rightarrow H^s_\gamma$ mapping properties of $\widetilde{R}(\lambda,h)$ for $\lambda^2 \notin \sigma(\widetilde{P}(h))$. 
\begin{lem}
Fix an interval $(a,b)\Subset \mathbb{R}_+$. For each $\gamma>0$ there exists $T_0>0$ such that 
\[
\|e^{\gamma\varphi}\widetilde{R}(\lambda,h)e^{-\gamma\varphi}\|_{L^2\rightarrow H^s}  = O(|\Im \lambda|^{-1}), \; s=0,1,2
\]
uniformly for $\lambda\in (a,b)+i(T_0 h , 1)$.
\end{lem}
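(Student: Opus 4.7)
The plan is a Combes--Thomas type argument comparing the weight-conjugated operator $\widetilde{P}_\gamma(h) := e^{\gamma\varphi}\widetilde{P}(h)e^{-\gamma\varphi}$ to the self-adjoint operator $\widetilde{P}(h)$. Since $\widetilde P(h)$ is self-adjoint on $L^2(\mathbb{R}^n)$ with real spectrum, the spectral theorem gives $\|\widetilde R(\lambda,h)\|_{L^2\to L^2} \leq |\Im\lambda^2|^{-1}$, and for $\lambda\in (a,b)+i(0,1)$ one has $|\Im\lambda^2| = 2\Re\lambda\cdot\Im\lambda \geq 2a\,\Im\lambda$. Combined with semiclassical elliptic regularity for the uniformly elliptic operator $\widetilde{P}(h)$ (whose coefficients lie in $C_b^\infty$) and with $|\lambda|$ bounded, this yields the a priori estimate $\|\widetilde R(\lambda,h)\|_{L^2\to H^s} = O(|\Im\lambda|^{-1})$ for $s=0,1,2$.

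Next, compute $\widetilde P_\gamma$ explicitly using $e^{\gamma\varphi}(h\partial_j)e^{-\gamma\varphi} = h\partial_j - h\gamma(\partial_j\varphi)$. This gives
\[
\widetilde{P}_\gamma(h) = \widetilde{P}(h) + h\gamma B_1 + h^2\gamma^2 B_0,
\]
where $B_1$ is a first-order differential operator with uniformly bounded coefficients built from $a_{ij}$ and the bounded derivatives of $\varphi$, and $B_0$ is a bounded multiplication operator. In particular $\|B_1 u\|_{L^2} \lesssim \|u\|_{H^1}$ and $\|B_0 u\|_{L^2}\lesssim \|u\|_{L^2}$, uniformly in $h$. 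The principal symbol of $\widetilde P_\gamma$ is unchanged, so $\widetilde P_\gamma$ remains uniformly elliptic with uniformly bounded coefficients.

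The heart of the argument is the operator identity
\[
\widetilde{P}_\gamma - \lambda^2 = \bigl[I + (h\gamma B_1 + h^2\gamma^2 B_0)\widetilde{R}(\lambda,h)\bigr]\bigl(\widetilde{P}(h) - \lambda^2\bigr).
\]
The bracketed operator has $L^2\to L^2$ norm $1 + O(\gamma h/|\Im\lambda|)$ by the previous two steps, so choosing $T_0$ so large that this error is below $1/2$ whenever $\Im\lambda > T_0 h$ allows Neumann inversion. This yields
\[
e^{\gamma\varphi}\widetilde R(\lambda,h)e^{-\gamma\varphi} = (\widetilde P_\gamma - \lambda^2)^{-1} = \widetilde R(\lambda,h)\bigl[I + (h\gamma B_1 + h^2\gamma^2 B_0)\widetilde R(\lambda,h)\bigr]^{-1},
\]
with $L^2\to L^2$ norm $O(|\Im\lambda|^{-1})$. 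A final application of semiclassical elliptic regularity, this time to $\widetilde P_\gamma$, upgrades this to the required $H^1$ and $H^2$ bounds.

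The main obstacle is controlling the perturbation $(\widetilde P_\gamma - \widetilde P)\widetilde R(\lambda,h)$ in operator norm: because the available $L^2\to L^2$ bound on $\widetilde R$ degrades like $|\Im\lambda|^{-1}$ as $\lambda$ approaches the real axis, while the perturbation itself is of size $O(h\gamma)$ as a first-order operator, the product is only small when $\Im\lambda/h$ is sufficiently large. This is exactly the role of the threshold $T_0 h$ in the statement, with $T_0$ proportional to $\gamma$.
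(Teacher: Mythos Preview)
Your proof is correct and follows essentially the same Combes--Thomas/conjugation argument as the paper: compute $e^{\gamma\varphi}\widetilde{P}(h)e^{-\gamma\varphi}=\widetilde{P}(h)+(\text{lower order, size }O(h))$, use the resulting operator identity and a Neumann series on the strip $\Im\lambda\geq T_0 h$, and read off the bound. The only cosmetic difference is the final $H^s$ upgrade: the paper simply observes that the identity $e^{\gamma\varphi}\widetilde{R}(\lambda,h)e^{-\gamma\varphi}=\widetilde{R}(\lambda,h)\bigl(I+h^2B\widetilde{R}(\lambda,h)\bigr)^{-1}$ already has $\widetilde{R}(\lambda,h)$ on the left, so the known $L^2\to H^s$ bound for $\widetilde{R}(\lambda,h)$ applies directly, whereas you invoke elliptic regularity for $\widetilde{P}_\gamma$; both routes are valid and equally short.
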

\begin{proof}
Conjugating $\widetilde{P}(h)$ by $e^{\gamma\varphi}$ yields
\[
e^{\gamma\varphi} \widetilde{P}(h) e^{-\gamma\varphi} = \widetilde{P}(h) + h^2 B, 
\]
where 
\[
B = \sum_{i,j} (2\gamma a_{ij}\partial_i\varphi) \partial_j - \gamma^2 a_{ij} \partial_i \varphi\, \partial_j \varphi + \gamma \partial_i(a_{ij}\partial_j\varphi)
\]
is a first order operator with uniformly bounded coefficients. It follows that for $\lambda^2 \notin \sigma(\widetilde{P}(h))$ (in particular for $\Im \lambda > 0$ and $\Re \lambda > 0$) we can write
\[
e^{\gamma\varphi} \widetilde{P}(h) e^{-\gamma\varphi} - \lambda^2 = (I + h^2 B \widetilde{R}(\lambda,h))(\widetilde{P}(h)-\lambda^2)
\]
Now $\widetilde{P}(h)$ is self-adjoint and elliptic, hence 
\[
\| u \|_{H^2} < C\|(\widetilde{P}(h)+i)u\|_{L^2}.
\]
It follows that for $\lambda \in (a,b) + i(0,1)$, 
\begin{equation} \label{eq:tilderesolvent}
\| \widetilde{R}(\lambda,h) \|_{L^2\rightarrow H^s} = O(|\Im \lambda|^{-1}),\; s=0,1,2.
\end{equation}
We immediately deduce that
\[
\|h^2 B \widetilde{R}(\lambda,h)\|_{L^2\rightarrow L^2} = O(h |\Im \lambda|^{-1}) \leq 1/2, \; \lambda\in (a,b) + i[T_0 h,1) 
\]
for $T_0>0$ large enough. In particular, $I+h^2 B \widetilde{R}(\lambda,h):L^2(\mathbb{R}^n)\rightarrow L^2(\mathbb{R}^n)$ is invertible for $\lambda\in (a,b) + i[T_0h,1)$ and
\[
e^{\gamma\varphi}\widetilde{R}(\lambda,h)e^{-\gamma\varphi} = \widetilde{R}(\lambda,h)(I+h^2 B \widetilde{R}(\lambda,h))^{-1}.
\]
This also shows that
\[
\|e^{\gamma\varphi}\widetilde{R}(\lambda,h)e^{-\gamma\varphi}\|_{L^2\rightarrow H^s} = O(|\Im \lambda|^{-1}), \; s=0,1,2 
\]
for $\lambda\in (a,b) + i[T_0 h,1)$.
\end{proof} 

The following lemma is useful in the proof of the meromorphic continuation. Equip $\DOM$ with the $h$-dependent norm
\[
\| u \|_\DOM = \| (P(h) + i)u \|_{\mathcal{H}}.
\]
Then it is easy to see that under the uniform boundedness conditions on the derivatives of $a_{ij}$ and $V$, the analog of \cite[Proposition 4.1]{Sjostrand:1991} remains true:
\begin{lem} \label{lem:uniformboundedness}
Suppose $\chi \in C^\infty_b(\mathbb{R}^n)$ has support disjoint from $\overline{\BINT}$. Then multiplication by $\chi$ is bounded $\DOM \rightarrow H^{2}(\mathbb{R}^n)$ and $H^{2}(\mathbb{R}^n)\rightarrow\DOM$ with a norm bounded independently of $h$.
\end{lem}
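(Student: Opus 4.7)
The plan is to prove the two mapping properties separately. The easier direction $H^2(\mathbb{R}^n) \to \DOM$ proceeds as follows: given $v \in H^2(\mathbb{R}^n)$, the product $\chi v$ lies in $H^2(\mathbb{R}^n)$ with a norm bound independent of $h$ (since $\chi$ has bounded derivatives), and since $\chi v$ vanishes near $\overline{\BINT}$, the second defining property of $\DOM$ places $\chi v \in \DOM$. On its support $P(h)$ coincides with $\widetilde{P}(h)$, so using the uniform $C^\infty_b$ bounds on the coefficients of $\widetilde{P}(h)$ one obtains
\[
\|\chi v\|_\DOM = \|(\widetilde{P}(h)+i)(\chi v)\|_{L^2} \leq C\|v\|_{H^2}.
\]

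For the direction $\DOM \to H^2(\mathbb{R}^n)$, let $u\in \DOM$ and set $f = (P(h)+i)u$, so that $\|u\|_\DOM = \|f\|_\mathcal{H}$ and, by self-adjointness, also $\|u\|_\mathcal{H} \leq \|f\|_\mathcal{H}$. Choose $\tilde\chi \in C^\infty_b(\mathbb{R}^n)$ with $\tilde\chi \equiv 1$ on $\supp\chi$ and $\supp\tilde\chi$ disjoint from $\overline{\BINT}$. Since $Pu = \widetilde{P}u = f - iu$ on $\Bext$,
\[
\|\tilde\chi\, \widetilde{P}(h) u\|_{L^2} \leq \|f\|_\mathcal{H} + \|u\|_\mathcal{H} \leq 2\|f\|_\mathcal{H}.
\]
I would then invoke the standard semiclassical interior elliptic estimate
\[
\|\chi u\|_{H^2(\mathbb{R}^n)} \leq C\bigl(\|\tilde\chi\, \widetilde{P}(h) u\|_{L^2} + \|\tilde\chi u\|_{L^2}\bigr),
\]
with $C$ independent of $h$, whose validity rests on the uniform ellipticity of $(a_{ij})$ together with uniform $C^\infty_b$ bounds on its derivatives and on $V$. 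Combining these estimates gives $\|\chi u\|_{H^2} \leq C\|f\|_\mathcal{H}$ as required.

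The main technical point is the uniform-in-$h$ constant in the interior elliptic estimate. The cleanest route is a short bootstrap with nested cutoffs $\chi \prec \chi_1 \prec \tilde\chi$: since each $\chi_j u$ lies in $\DOM \cap H^2(\mathbb{R}^n)$ by property 2, one may write
\[
(\widetilde{P}(h)+i)(\chi_j u) = \chi_j f + [\widetilde{P}(h), \chi_j] u,
\]
where the commutator is a first-order semiclassical operator of size $O(h)$ with coefficients supported where the next cutoff is identically one. Applying the global uniform bound $\|w\|_{H^2} \leq C\|(\widetilde{P}+i)w\|_{L^2}$ (a semiclassical G\aa{}rding estimate for the uniformly elliptic operator $\widetilde{P}(h)$) together with interpolation $\|\chi_1 u\|_{H^1} \leq \epsilon\|\chi_1 u\|_{H^2} + C_\epsilon\|u\|_\mathcal{H}$ absorbs the commutator term, provided $h$ remains bounded. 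This bootstrap is the one subtle ingredient, forced by the commutator only controlling $u$ in $H^1$ of a slightly larger set; everything else is formal manipulation from the two defining properties of $\DOM$ and the identity $P = \widetilde{P}$ on $\Bext$.
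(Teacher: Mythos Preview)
Your proof is correct and follows essentially the same route as the paper: for $H^2\to\DOM$ both you and the paper reduce to the uniform boundedness of $\widetilde{P}(h)+i$ on $H^2$, and for $\DOM\to H^2$ both use an interior elliptic estimate for $\widetilde{P}(h)$ with a nested cutoff $\chi_1\equiv 1$ on $\supp\chi$. The paper simply states the semiclassical a priori estimate $\|\chi u\|_{H^2}\le C(\|\chi_1\widetilde{P}(h)u\|_{L^2}+\|\chi_1 u\|_{L^2})$ as a consequence of ellipticity, whereas you sketch its proof via the commutator bootstrap; this extra detail is fine but not required.
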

\begin{proof}
Consider first the map $\chi : \DOM \rightarrow H^2(\mathbb{R}^n)$. Since $\widetilde{P}(h)$ is elliptic, we have the a priori estimate
\begin{align*}
\| \chi u \|^2_{H^2(\mathbb{R}^n)} &\leq C_1 \left( \| \chi_1 \widetilde{P}(h) 1_{\Bext} u \|^2_{L^2(\Bext)} + \| \chi_1 1_{\Bext} u \|^2_{L^2(\Bext)} \right) \\ &\leq C_2 \| (P(h)+i)u \|^2_{\mathcal{H}},  
\end{align*}
where $\chi_1 \equiv 1$ on $\supp \chi$ and $\chi_1$ also has support disjoint from $\overline{\BINT}$. All the constants are independent of $h$. For the case $\chi : H^2(\mathbb{R}^n) \rightarrow \DOM$ this is equivalent to the uniform boundedness of $\widetilde{P}(h)$ on $H^2(\mathbb{R}^n)$, namely
\[
\| \chi u \|_{\DOM} = \| (\widetilde{P}(h)+i)(\chi u) \|_{L^2(\mathbb{R}^n)} \leq C \| u \|_{H^2(\mathbb{R}^n)}. 
\]
\end{proof}

In what follows, we will always be concerned with $\lambda$ ranging in a precompact neighborhood of $\mathbb{R}_+$. So fix $0< a_0<b_0, \, \epsilon_0>0$ and define
\[
\Omega(h) = (a_0,b_0) + i((-\gamma+\epsilon_0)h,1).
\]
For each $\epsilon >0$ we also define a shrunken neighborhood,
\[
\Omega_\epsilon(h) = (a_0+\epsilon,b_0-\epsilon) + i((-\gamma+\epsilon_0+\epsilon)h,1).
\]

\begin{prop} The function $R(\lambda,h)$ has a meromorphic continuation in $\Omega(h)$ as a family of bounded operators $\mathcal{H}_\gamma \rightarrow \mathcal{H}_{-\gamma}$.
\end{prop}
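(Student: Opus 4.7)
The plan is to glue together a parametrix from $\widetilde{R}(\lambda,h)$ acting away from the black box and $(P(h)+i)^{-1}$ acting inside, and then invert the identity-plus-compact error via the analytic Fredholm theorem. A preliminary step is to continue $\widetilde{R}(\lambda,h)$ meromorphically as an operator $L^2_\gamma \to H^2_{-\gamma}$ throughout $\Omega(h)$, strengthening the preceding lemma (which only covers the smaller strip $\Im\lambda > T_0 h$): from the rearrangement $\bigl(I + R_0(\lambda,h) A(h)\bigr)\widetilde{R}(\lambda,h) = R_0(\lambda,h)$ and the holomorphy of $R_0(\lambda,h):L^2_\gamma \to H^2_{-\gamma}$ on $\Omega(h)$, the continuation reduces to inverting $I + R_0(\lambda,h)A(h)$. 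The decay bound \eqref{eq:potentialdecay} gives $A(h): H^2_{-\gamma} \to L^2_{\gamma+\delta}$, and composing with $R_0$ (which gains two derivatives elliptically) followed by the compact embedding of exponentially weighted Sobolev spaces into weaker ones (via Rellich and tightness at infinity from the margin $\delta > 0$) yields compactness of $R_0(\lambda,h)A(h)$ on a suitable weighted Sobolev space. Invertibility at some point with $\Im\lambda > T_0 h$ is furnished by the previous lemma, so the analytic Fredholm theorem completes the continuation of $\widetilde{R}$.

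Next, pick cutoffs $\chi_0,\chi_1 \in C^\infty_c(\mathbb{R}^n)$, both identically $1$ near $\overline{B(0,R_0)}$, with $\chi_1 \equiv 1$ on $\supp \chi_0$, and set
\[
E(\lambda,h) \;=\; (1-\chi_1)\widetilde{R}(\lambda,h)(1-\chi_0) \;+\; \chi_1\bigl(P(h)+i\bigr)^{-1}.
\]
By Lemma \ref{lem:uniformboundedness} this is a bounded meromorphic family $\mathcal H_\gamma \to \DOM_{-\gamma}$. Using $P(h) = \widetilde{P}(h)$ on $\Bext$, the Leibniz rule, and the cancellation $(1-\chi_1)(1-\chi_0) = 1 - \chi_1$, one verifies
\[
(P(h)-\lambda^2)E(\lambda,h) \;=\; I + K(\lambda,h),
\]
where
\[
K(\lambda,h) \;=\; -[\widetilde P,\chi_1]\widetilde{R}(\lambda,h)(1-\chi_0) \;-\; (\lambda^2+i)\chi_1(P(h)+i)^{-1} \;+\; [P(h),\chi_1](P(h)+i)^{-1}.
\]
Each summand of $K(\lambda,h)$ is compact on $\mathcal H_\gamma$: the commutators $[\widetilde P,\chi_1]$ and $[P(h),\chi_1]$ are first-order differential operators with coefficients supported in the compact set $\supp d\chi_1\subset \Bext$, so after composition with $\widetilde{R}(\lambda,h)$ or $(P(h)+i)^{-1}$ (each producing $H^2$-regularity) their outputs are uniformly bounded in $H^1$ with uniformly compact support, hence precompact in $L^2$ by Rellich; and $\chi_1(P(h)+i)^{-1}$ is compact on $\mathcal H$ by combining the black-box compactness \eqref{eq:blackboxcompactness} with Rellich applied to the $H^2$ component on $\Bext$.

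Since $K(\lambda,h)$ is compact and meromorphic on the connected set $\Omega(h)$, the analytic Fredholm theorem provides a meromorphic inverse $(I + K(\lambda,h))^{-1}$ as soon as we can verify invertibility at a single point. This holds at any $\lambda_0 \in \Omega(h)$ with $\Im \lambda_0 > 0$, for which the genuine $L^2$-resolvent $R(\lambda_0,h):\mathcal H \to \DOM$ exists by self-adjointness of $P(h)$; applying $R(\lambda_0,h)$ to both sides of $(P(h)-\lambda_0^2)E(\lambda_0,h) = I+K(\lambda_0,h)$ yields the factorization $E(\lambda_0,h) = R(\lambda_0,h)\bigl(I+K(\lambda_0,h)\bigr)$, and injectivity of $I + K(\lambda_0,h)$ follows from the support structure of $E$ together with unique continuation for $\widetilde{P}(h) - \lambda_0^2$. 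Accordingly,
\[
R(\lambda,h) \;=\; E(\lambda,h)\bigl(I + K(\lambda,h)\bigr)^{-1}\colon \mathcal H_\gamma \to \mathcal H_{-\gamma}
\]
furnishes the desired meromorphic continuation. The main technical obstacle is the compactness verification—both for $R_0(\lambda,h)A(h)$ in the preliminary step and for $K(\lambda,h)$ in the parametrix—which requires carefully balancing the exponential weights against derivative losses while exploiting the strict margin $\delta > 0$ in \eqref{eq:potentialdecay}.
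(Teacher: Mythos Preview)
Your strategy—continue $\widetilde R(\lambda,h)$ first via $(I+R_0A)\widetilde R = R_0$, then run a simpler black-box parametrix—has a genuine gap in the preliminary step once metric perturbations are present. The operator $R_0(\lambda,h)A(h)$ is a zeroth-order pseudodifferential operator whose principal symbol $\bigl(|\xi|^2-\lambda^2/h^2\bigr)^{-1}\sum_{ij}(a_{ij}-\delta_{ij})\xi_i\xi_j$ does not decay as $|\xi|\to\infty$; consequently it is \emph{not} compact on $H^2_{-\gamma}$ (nor is $AR_0$ on $L^2_\gamma$). Concretely, the sequence $v_n=n^{-2}e^{inx_1}\psi$ with $\psi\in C^\infty_c$ is bounded in $H^2$, while $R_0Av_n$ has second derivatives of order one in $L^2$ with oscillating phase, so no $H^2$-convergent subsequence. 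The exponential decay of $a_{ij}-\delta_{ij}$ buys a weight gain but no derivative gain, and the inclusion $H^s_\alpha\hookrightarrow H^s_\beta$ is never compact. Hence analytic Fredholm does not apply to $I+R_0A$ and your continuation of $\widetilde R$ to all of $\Omega(h)$ is unjustified. (For a pure potential perturbation $A=V$ your argument would work, since then $VR_0$ \emph{is} compact.)

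The paper sidesteps this by never continuing $\widetilde R$. Its exterior parametrix
\[
Q_0=(1-\chi_0)\bigl(R_0(\lambda,h)-\widetilde R(\lambda_0,h)A(h)R_0(\lambda,h)\bigr)(1-\chi_1)
\]
carries the $\lambda$-dependence solely through the free resolvent, with $\widetilde R(\lambda_0,h)$ frozen at a point $\Im\lambda_0\geq T_0h$ where, by the lemma immediately preceding the proposition, it maps $L^2_\gamma\to H^2_\gamma$ \emph{preserving} the weight. The resulting error $K_1=(\lambda^2-\lambda_0^2)(1-\chi_0)\widetilde R(\lambda_0)A R_0(\lambda)(1-\chi_1)$ therefore lands in $H^2_{\gamma+\delta}$, and compactness follows from $H^2_{\gamma+\delta}\hookrightarrow L^2_\gamma$, which enjoys both a derivative and a weight gain. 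Invertibility of $I+K$ is then obtained by norm smallness at $\lambda=\lambda_0$ (where $K_1$ and $K_3$ vanish and the commutator terms are $O(h|\Im\lambda_0|^{-1})$), not by unique continuation; your one-line unique-continuation sketch would in any case require substantially more argument to show that $E(\lambda_0)v=0$ forces $v=0$ across the black box.
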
  \label{prop:meromorphicextension}
\begin{proof}
Choose cutoff functions $\chi, \chi_i \in C_c^\infty(\mathbb{R}^n), \, i=0,1,2$, so that $\chi_0 \equiv 1$ near $\BINT$ with $\chi_i \equiv 1$ on $\supp \chi_{i-1}$ and $\chi \equiv 1$ on $\supp \chi_2$. We can always choose these so that $\chi \varphi = 0$ and $\chi_i \varphi \equiv 0$.
Approximate $R(\lambda,h)$ by a parametrix of the form $Q_0(\lambda,\lambda_0,h)+Q_1(\lambda_0,h)$
where
\begin{align*}
&Q_0(\lambda,\lambda_0,h) = (1-\chi_0)(R_0(\lambda,h)-\widetilde{R}(\lambda_0,h)A(h)R_0(\lambda,h))(1-\chi_1), \\
&Q_1(\lambda_0,h) = \chi_2 R(\lambda_0,h) \chi_1, 
\end{align*}
see also \cite{SaBarreto:1995}. Here $\lambda_0 = \lambda_0(h)$ denotes a point in $\Omega(h)$ with $\Im \lambda_0 \geq T_0 h$. We now compute
\[
(P(h)-\lambda^2)Q_0(\lambda,\lambda_0,h) = (1-\chi_1) + K_0(\lambda,\lambda_0,h) + K_1(\lambda,\lambda_0,h)
\]
where
\begin{align*}
K_0(\lambda,\lambda_0,h) &= -[\widetilde{P}(h),\chi_0](R_0(\lambda,h)-\widetilde{R}(\lambda_0,h)A(h)R_0(\lambda,h))(1-\chi_1)\\
K_1(\lambda,\lambda_0,h) &= (1-\chi_0)(\lambda^2-\lambda_0^2)(\widetilde{R}(\lambda_0,h)A(h)R_0(\lambda,h)(1-\chi_1),
\end{align*}
and
\[
(P(h)-\lambda^2)Q_1(\lambda_0,h) = \chi_1 + K_2(\lambda_0,h) +  K_3(\lambda,\lambda_0,h)
\]
where
\begin{align*}
K_2(\lambda_0,h) &= -[\widetilde{P}(h),\chi_2]R(\lambda_0,h)\chi_1, \\
K_3(\lambda,\lambda_0,h) &= \chi_2(\lambda_0^2-\lambda^2) R(\lambda_0,h) \chi_1.
\end{align*}
If we let $K =  K_0+ K_1 +  K_2 +  K_3$ then
\[
(P(h)-\lambda^2)(Q_0(\lambda,\lambda_0,h) + Q_1(\lambda_0,h)) = I + K(\lambda,\lambda_0,h).
\]
Note that if $\psi\in C^\infty_c(\mathbb{R}^n)$ then $[\widetilde{P}(h),\psi]$ is a first order operator with compactly supported coefficients and $\|[\widetilde{P}(h),\psi]\|_{H^1(\mathbb{R}^n)\rightarrow L^2(\mathbb{R}^n)} = O(h)$.

It is easy to see that $Q_0 + Q_1 : \mathcal{H}_\gamma\rightarrow \mathcal{H}_{-\gamma}$. For $Q_0$ this follows from the mapping properties of $R_0(\lambda,h), A(h)$, and $\widetilde{R}(\lambda_0,h)$. For $Q_1$ this fact is trivial since $Q_1$ contains compactly supported cutoffs. We also remark that by the resolvent identity,
\[
K_0(\lambda_0,\lambda_0,h) = -[\widetilde{P}(h),\chi_0] \widetilde{R}(\lambda_0,h)(1-\chi_1).
\]

To apply the Fredholm theory, we begin by showing that $K: \mathcal{H}_\gamma \rightarrow \mathcal{H}_\gamma$ is compact. First note that 
\[
K_0(\lambda,\lambda_0,h) = -[\widetilde{P}(h),\chi_0] (R_0(\lambda,h)-\widetilde{R}(\lambda_0,h)A(h)R_0(\lambda,h)) (1-\chi_1) : \mathcal{H}_\gamma \rightarrow \mathcal{H}_\gamma
\]
is compact: we see that $R_0(\lambda,h):L^2_\gamma(\mathbb{R}^n)\rightarrow H^2_{-\gamma}(\mathbb{R}^n)$ and $\widetilde{R}(\lambda_0,h)A(h)R_0(\lambda,h):L^2_\gamma(\mathbb{R}^n)\rightarrow H^2_{\gamma+\delta}(\mathbb{R}^n)$. On the other hand $[\widetilde{P}(h),\chi_0]$ is compactly supported and hence maps $H^2_\alpha(\mathbb{R}^n) \rightarrow L^2(\Bext)$ compactly for each $\alpha \in\mathbb{R}$. 

Similarly we can write
\[
K_2(\lambda_0,h) = [\widetilde{P}(h),\chi_2] (1-\chi_0) R(\lambda_0,h) \chi_1
\]
which is compact since $(1-\chi_0) R(\lambda_0,h)\chi_1 : \mathcal{H}_{\gamma} \rightarrow H^2(\Bext)$ and $[\widetilde{P}(h),\chi_2]$ is compactly supported. To see that $K_1$ is compact, again use that $\widetilde{R}(\lambda_0,h)A(h)R_0(\lambda,h):L^2_\gamma(\mathbb{R}^n)\rightarrow H^2_{\gamma+\delta}(\mathbb{R}^n)$ and now appeal to the fact that the inclusion
\[
H^2_{\gamma+\delta}(\mathbb{R}^n) \hookrightarrow L^2_{\gamma}(\mathbb{R}^n)
\]
is compact. Finally, the compactness of $K_3(\lambda,\lambda_0,h)$ follows from \eqref{eq:blackboxcompactness}.

Next, we need to verify the invertibility of $I+K(\lambda,\lambda_0,h)$ for at least one value of $\lambda\in\Omega(h)$. Recall that multiplication by $(1-\chi_0): H^2(\mathbb{R}^n)\rightarrow \DOM$ is uniformly bounded in $h$. It follows that for $\lambda_0\in \Omega(h)$ in the upper half-plane, we have
\[
\| (1-\chi_0) R(\lambda_0,h)u \|_{H^2(\mathbb{R}^n)} \leq C_1\|(P(h)+i)R(\lambda_0,h)u\|_{\mathcal{H}} \leq C_2 |\Im \lambda_0|^{-1}\|u\|_{\mathcal{H}},\, u\in \mathcal{H}
\]
and hence 
\[
\|(1-\chi)R(\lambda_0,h)\|_{\mathcal{H}\rightarrow H^2(\mathbb{R}^n)} = O(|\Im \lambda_0|^{-1}),\, \lambda_0 \in \Omega(h),\, \Im \lambda_0 > 0.
\] 
Here we used 
\[
(P(h)+i)R(\lambda_0,h) = I + (\lambda_0^2+i)R(\lambda_0,h)
\]
and $R(\lambda_0,h) = O_{\mathcal{H}\rightarrow\mathcal{H}}(|\Im \lambda_0|^{-1})$. Combining this with Equation \eqref{eq:tilderesolvent}, we see there exists $T_1>T_0$ such that if $\lambda_0\in \Omega(h)$ satisfies $\Im \lambda_0 \geq T_1h$, then
\[
\| K(\lambda_0,\lambda_0,h)\|_{\mathcal{H}_\gamma\rightarrow \mathcal{H}_\gamma} = O(h|\Im \lambda_0|^{-1}) \leq 1/2,
\]
and hence $I+K(\lambda_0,\lambda_0,h)$ will be invertible.

\end{proof}

\begin{rem} The poles and their multiplicities of the extension obtained above do not depend on the particular choice of $\varphi$. Indeed, if $\varphi_1$ and $\varphi_2$ both vanish near $\Bext$ and equal $|x|$ for $|x|$ large, then
\[
e^{-\gamma \varphi_1} R(\lambda,h) e^{-\gamma \varphi_1} = e^{-\gamma (\varphi_1-\varphi_2)} e^{-\gamma \varphi_2}  R(\lambda,h) e^{-\gamma \varphi_2} e^{-\gamma (\varphi_1-\varphi_2)}
\]
and vice versa. Hence the poles and multiplicities of one such extension agree with those of any other.
  
\end{rem}

\begin{rem} As pointed out by the anonymous referee, an interesting question is whether $R(\lambda,h)$ can be continued to a larger region in the lower half plane when the perturbations are smooth functions of $\exp((-2\gamma -\delta)|x|)$ for $|x|$ large (and also whether the corresponding resolvent estimates hold). Such hypotheses are satisfied for stationary wave operators arising from black hole metrics with nondegenerate event horizons, see \cite{Dyatlov:2011}, \cite{Gannot:2014} for two examples.  
\end{rem}

At this point we need to introduce a new assumption on a reference operator $P^\sharp(h)$, defined as follows: choose $R_1>R_0$ and $R_2 > 2R_1$ and let $\mathbb{T}$ denote the torus $\mathbb{T}=(\mathbb{R}/R_2\mathbb{Z})^n$. Let 
\[
\mathcal{H}^\sharp = \mathcal{H}_{R_0} \oplus L^2(\mathbb{T}\backslash \BINT)
\]
where $B(0,R_1)$ is considered a subset of $\mathbb{T}$. Define the dense subspace
\[
\DOM^\sharp = \{u\in \mathcal{H}^\sharp: \, \psi u\in \DOM \, , (1-\psi)u\ \in H^2(\mathbb{T})  \},
\]
where $\psi\in C_c^\infty (B(0,R_1))$ satisfies $\psi\equiv 1$ near $\BINT$. Now set
\[
P^\sharp(h)u = P(h)\psi u + (-\sum_{i,j} (h\partial_{i}) a_{ij} (h\partial_{j}) + V)(1-\psi)u, \quad u\in\DOM^\sharp.
\]
Then $P^\sharp(h)$ is self-adjoint on $\DOM^\sharp$ with discrete spectrum. We require that
\begin{equation} \label{eq:referencehypothesis}
\# \{z\in \sigma(P^\sharp(h)): z \in [-L,L] \}  \leq C(L/h^2)^{n^\sharp/2}
\end{equation}
for some $n^\sharp \geq n$ and each $L\geq 1$. Here the eigenvalues are counted with multiplicity. If $z_1,z_2,z_3,\ldots$ are the eigenvalues of $P^\sharp(h)$ ordered so $|z_1| \leq |z_2| \leq |z_3| \leq \ldots$, then the singular values of $(P^\sharp(h)-\lambda_0^2)^{-1}$ are $\mu_j((P^\sharp(h)-\lambda_0^2)^{-1}) = |z_j-\lambda_0^2|^{-1}$. If $\Im \lambda_0 = T_1h$, then Condition \eqref{eq:referencehypothesis} implies that there exists a constant $C>0$ so that 
\[
\mu_j((P^\sharp(h)-\lambda_0^2)^{-1}) \leq Ch^{-2}j^{-2/{n^{\sharp}}}, \quad j > Ch^{-n^\sharp}.  
\]

\section{Resolvent estimates}

To estimate $R(\lambda,h)$, we make use of the following general fact \cite[Chap. V, Theorem 5.1]{Gohberg:1967}: Suppose $A$ is a compact operator lying in some $p$-class. If $(I+A)$ is invertible then
\[
\| (I + A)^{-1} \| \leq \frac{\det(I+|A|^{p})}{|\det(I+A^p)|}.
\]
We wish to apply this inequality to $(I+K)$, but first we need to verify that a suitable power of $K$ is of trace class. Under our hypotheses we cannot estimate the singular values of $K_2$; nevertheless, the proof of Proposition \ref{prop:meromorphicextension} shows that $I+K_2(\lambda_0,h)$ is invertible on $\mathcal{H}_\gamma$ for $\Im \lambda_0 > T_1h$, so we use the decomposition
\[
(I+K(\lambda,\lambda_0,h))=(I+K_2(\lambda_0,h))(I+\widetilde{K}(\lambda,\lambda_0,h)),
\]
where $\widetilde{K} = (I+K_2)^{-1}(K_0+K_1+K_3)$. Note that $I+K$ and $I+\widetilde{K}$ have the same poles. 

\subsection{Singular values}
From now on we will always choose $\lambda_0 \in \Omega(h)$ with fixed imaginary part $\Im \lambda_0 = T_1 h$. Throughout, it will be clear that whenever an estimate depends on $\lambda_0\in \Omega(h)$, it really only depends on $\Im \lambda_0$.

\begin{prop}
The operator $\widetilde{K}(\lambda,\lambda_0,h)^{n^\sharp+1}: \mathcal{H}_\gamma \rightarrow \mathcal{H}_\gamma$ is of trace class for $\lambda\in \Omega(h)$.
\end{prop}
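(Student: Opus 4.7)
The strategy is to establish a Schatten-class bound $\widetilde{K}(\lambda, \lambda_0, h) \in S_{n^\sharp+1}(\mathcal{H}_\gamma)$ via singular value estimates, and then apply the H\"older inequality for Schatten norms, $\|A^{n^\sharp+1}\|_{S_1} \leq \|A\|_{S_{n^\sharp+1}}^{n^\sharp+1}$, to deduce that $\widetilde{K}^{n^\sharp+1}$ is trace class. Since $(I + K_2(\lambda_0, h))^{-1}$ is bounded on $\mathcal{H}_\gamma$ by the proof of Proposition \ref{prop:meromorphicextension}, Horn's inequality $\mu_j(AB) \leq \|A\|\mu_j(B)$ reduces the problem to estimating $\mu_j$ for each of $K_0$, $K_1$, $K_3$ separately, which I would then combine using the Ky Fan inequality $\mu_{i+j-1}(A+B) \leq \mu_i(A) + \mu_j(B)$.

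The core step is handling $K_3 = \chi_2(\lambda_0^2-\lambda^2) R(\lambda_0, h)\chi_1$, which requires transferring spectral information from the reference operator $P^\sharp(h)$ to the black-box resolvent. Hypothesis \eqref{eq:referencehypothesis} gives $\mu_j((P^\sharp(h)-\lambda_0^2)^{-1}) \leq C h^{-2} j^{-2/n^\sharp}$ for $j > Ch^{-n^\sharp}$. To transfer this bound, I would choose $\psi \in C_c^\infty(B(0, R_1))$ with $\psi \equiv 1$ on $\supp(\chi_1) \cup \supp(\chi_2)$; since $P(h)$ and $P^\sharp(h)$ coincide on $B(0, R_1)$, the parametrix identity
\[
\chi_2 R(\lambda_0, h) \chi_1 = \chi_2 (P^\sharp(h)-\lambda_0^2)^{-1}\chi_1 + \chi_2 (P^\sharp(h)-\lambda_0^2)^{-1}[P^\sharp(h), \psi]R(\lambda_0, h)\chi_1
\]
expresses $\chi_2 R(\lambda_0, h)\chi_1$ as two compositions through $(P^\sharp(h)-\lambda_0^2)^{-1}$. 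Combined with the boundedness $\|R(\lambda_0, h)\|_{\mathcal{H} \to \mathcal{H}} = O(h^{-1})$ and the compact support of $[P^\sharp(h), \psi]$, this yields $\mu_j(K_3) = O(j^{-2/n^\sharp})$ for large $j$.

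For $K_0$ and $K_1$, both pieces contain the compactly supported commutator $[\widetilde{P}(h), \chi_0]$, or combine $(1-\chi_0), (1-\chi_1)$ with the exponentially decaying perturbation $A(h): H^s_\alpha \to H^{s-2}_{\alpha+2\gamma+\delta}$. Using the weighted mapping properties $R_0(\lambda, h): L^2_\gamma \to H^2_{-\gamma}$ from Lemma \ref{lem:freeresolventbounds} together with the bounds on $\widetilde{R}(\lambda_0, h)$ from the preceding lemma, these pieces factor as $L^2 \to H^k$ maps whose ranges are effectively localized to compact sets or decay exponentially; Weyl asymptotics for the Dirichlet Laplacian then give $\mu_j(K_0), \mu_j(K_1) = O(j^{-1/n})$. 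Since $n^\sharp \geq n$ and hence $n^\sharp + 1 > n$, this suffices to place $K_0, K_1 \in S_{n^\sharp+1}$, completing the singular value estimate $\mu_j(\widetilde{K}) = O(j^{-\min(1/n, 2/n^\sharp)})$ needed for membership in $S_{n^\sharp+1}$.

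The main obstacle is the parametrix transfer for $K_3$: verifying that the error term $\chi_2(P^\sharp(h)-\lambda_0^2)^{-1}[P^\sharp(h), \psi]R(\lambda_0, h)\chi_1$ inherits the singular value rate of $(P^\sharp(h)-\lambda_0^2)^{-1}$ rather than degrading it. This requires careful bookkeeping of supports to exploit that $P(h)$ and $P^\sharp(h)$ agree on $B(0, R_1)$, and uniform-in-$h$ control of each factor, all within the weighted-space framework.
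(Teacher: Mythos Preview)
Your proposal follows essentially the same route as the paper: estimate the singular values of $K_0$, $K_1$, $K_3$ separately, combine via Fan's inequalities, and conclude $\widetilde{K}\in S_{n^\sharp+1}$. Your parametrix identity for $K_3$ is the mirror image of the paper's (you place $(P^\sharp(h)-\lambda_0^2)^{-1}$ on the left rather than the right), and either direction works equally well for transferring the singular value decay.

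One technical point deserves care. For $K_1=(1-\chi_0)(\lambda^2-\lambda_0^2)\widetilde{R}(\lambda_0,h)A(h)R_0(\lambda,h)(1-\chi_1)$ there is no compactly supported commutator, only the extra $e^{-\delta\varphi}$ decay coming from $A(h)$; the range of $K_1$ lies in $H^2_{\gamma+\delta}\subset \mathcal{H}_\gamma$ but is \emph{not} localized to a bounded set. Consequently the Dirichlet Laplacian on a ball does not serve as the reference operator for the singular value bound. The paper instead uses the harmonic oscillator $P_0(h)=-h^2\Delta+x^2$, whose resolvent is compact on $L^2(\mathbb{R}^n)$ with $\mu_j(P_0(h)^{-1})\leq Ch^{-1}j^{-1/n}$, and exploits that $P_0(h)e^{-\delta\varphi}:H^2\to L^2$ is bounded to obtain $\mu_j(K_1)\leq Ch^{-3}j^{-1/n}$. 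With this substitution your sketch is complete and matches the paper.
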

\begin{proof}
We estimate the singular values of each summand in $\widetilde{K}$. Since the weighted resolvent only continues to a narrow strip in the lower half-plane, in such a region it is particularly simple to estimate $\mu_j(K_0)$: Choose an open ball $B\subseteq \mathbb{R}^n$ containing $\supp \nabla \chi_0$ and let $-\Delta_B$ denote the Dirichlet Laplacian on $B$. Again using that the inclusion $1_{\Bext}:\DOM_\gamma\rightarrow H^{2}_\gamma$ is uniformly bounded in $h$, we consider $K_0$ as a map $\mathcal{H}_\gamma\rightarrow H^1(B)$. By Weyl asymptotics,
\[
\mu_j((-h^2\Delta_B)^{-1}) \leq C h^{-2} j^{-2/n}, \quad j =1,2,3,\ldots
\]  
Thus we estimate
\begin{align*}
\mu_j(K_0(\lambda,\lambda_0,h)) &\leq C \mu_j\left(\left(-h^2\Delta_B\right)^{-1/2}\right)\|(-h^2\Delta_B)^{1/2}K_0(\lambda,\lambda_0,h)\|_{\mathcal{H}_\gamma\rightarrow L^2(B)} \\
&\leq C h^{-3} j^{-1/n},\; \lambda\in \Omega(h).  
\end{align*}

By the same reasoning we estimate $\mu_j(K_1)$, writing
\[
\mu_j(K_1(\lambda,\lambda_0,h)) \leq C \mu_j(e^{\gamma\varphi} \widetilde{R}(\lambda_0,h) e^{-(\gamma+\delta)\varphi}) \|e^{(\gamma+\delta)\varphi} A(h) e^{\gamma\varphi} e^{-\gamma\varphi} R_0(\lambda) \|_{L^2_\gamma(\mathbb{R}^n)\rightarrow L^2(\mathbb{R}^n)}.
\]
In order to bound $\mu_j(e^{\gamma\varphi} \widetilde{R}(\lambda_0,h) e^{-(\gamma+\delta)\varphi})$, let $P_0(h) = -h^2\Delta + x^2$ denote the harmonic oscillator. The inequality $\mu_j(P_0(h)^{-1}) \leq C h^{-1}j^{-1/n}$ follows, in this case by explicit knowledge of the spectrum. Since $P_0(h)e^{-\delta\varphi}$ is bounded $H^2(\mathbb{R}^n)\rightarrow L^2(\mathbb{R}^n)$, we have
\begin{align*}
\mu_j(e^{\gamma\varphi} \widetilde{R}(\lambda_0,h) e^{-(\gamma+\delta)\varphi}) &\leq \mu_j(P_0(h)^{-1})\|P_0(h) e^{-\delta\varphi} e^{(\gamma+\delta)\varphi} \widetilde{R}(\lambda_0,h) e^{-(\gamma+\delta)\varphi}\|_{L^2(\mathbb{R}^n)\rightarrow L^2(\mathbb{R}^n)} \\
&\leq C h^{-2} j^{-1/n}.
\end{align*}
Combined with the previous estimate we obtain 
\[
\mu_j(K_1) \leq C h^{-3}j^{-1/n},\;\lambda\in \Omega(h).
\]
Next we estimate the singular values of $K_3$ using \eqref{eq:referencehypothesis}. First recall that $(P(h)-\lambda^2)\chi = (P^\sharp(h)-\lambda^2)\chi$, which implies that
\[
(P(h)-\lambda_0^2)^{-1}\chi_1 = \chi (P^\sharp(h)-\lambda_0^2)^{-1}\chi_1-(P(h)-\lambda_0^2)^{-1}[P^\sharp(h),\chi](P^\sharp(h)-\lambda_0^2)^{-1}\chi_1.
\]
Multiply this equation on the left by $\chi_2$ and apply Fan's inequality, $\mu_{2k-1}(A+B) \leq \mu_k(A) + \mu_k(B)$. Using that $(P(h)-\lambda_0^2)^{-1}[P^\sharp(h),\chi]$ has norm $O(1)$, we obtain
\[
\mu_j(K_3(\lambda,\lambda_0,h)) \leq C h^{-2} j^{-2/{n^\sharp}}, \quad j > Fh^{-n^\sharp} 
\]
for some constant $F>0$. For $j \leq Fh^{-n^\sharp}$ we simply bound $\mu_j(K_3) \leq Ch^{-1}$ using the trivial norm estimate. 

It is now clear that $\mu_j(K_i)^{n^\sharp}$ is summable for $j=0,1,3$.
\end{proof}

Applying the resolvent estimate as above, we obtain 
\begin{align} \label{eq:resolventestimate}
\| R(\lambda,h) \|_{\mathcal{H}_\gamma \rightarrow \mathcal{H}_{-\gamma}} &\leq \| Q_0 + Q_1 \|_{\mathcal{H}_\gamma \rightarrow \mathcal{H}_{-\gamma}} \| (I+K_2)^{-1} \|_{\mathcal{H}_\gamma \rightarrow \mathcal{H}_{\gamma}} \| (I+\widetilde{K})^{-1}\|_{\mathcal{H}_\gamma \rightarrow \mathcal{H}_{\gamma}} \notag\\
&\leq C \| Q_0 + Q_1 \|_{\mathcal{H}_\gamma \rightarrow \mathcal{H}_{-\gamma}} \frac{\det(I+(\widetilde{K}^*\widetilde{K})^\frac{n^\sharp+1}{2})}{|\det(I+\widetilde{K}^{n^\sharp+1})|}.
\end{align}

Since 
\[
\| Q_0 + Q_1 \|_{\mathcal{H}_\gamma \rightarrow \mathcal{H}_{-\gamma}} = O(h^{-2}),\; \lambda\in \Omega(h),
\]
it remains only to estimate the determinants. Define
\[
f(\lambda,h) = \det (I+\widetilde{K}^{n^\sharp+1}(\lambda,\lambda_0,h))
\]
in $\Omega(h)$. By Weyl convexity inequalities, it follows that $|f(\lambda,h)| \leq M(h), \, \lambda\in\Omega(h)$, where
\[
M(h) = \sup_{\lambda\in \Omega(h)} \det(I+(\widetilde{K}^*\widetilde{K})^{\frac{n^\sharp+1}{2}}).
\]
We therefore need to bound $M(h)$ from above and $|f(\lambda,h)|$ from below. 

\subsection{Estimating the determinant from above} 
Here we obtain an upper bound for $M(h)$ of the form $M(h) \leq e^{Ch^{-p}}$. For the application in mind, the value of $p$ is unimportant and we do not attempt to optimize the exponent. In fact $h^{-p}$ also represents a polynomial upper bound for the number of resonances in a disk of radius $h$, but again obtaining an optimal value is unimportant in this context.
  
\begin{prop} \label{prop:upperbound} There exists $C>0$ depending only on $\Im\lambda_0$, and $p>0$ such that
\[  
M(h) \leq e^{Ch^{-p}}.
\]
\end{prop}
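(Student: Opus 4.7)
The plan is to apply the classical inequality $\det(I+(A^*A)^{p/2}) \leq \prod_j(1+\mu_j(A)^{p})$ with $A=\widetilde K$ and $p=n^\sharp+1$, and then control the resulting product using the singular-value bounds for $K_0,K_1,K_3$ already established in the previous proposition, together with the Ky Fan estimate $\mu_{3j-2}(A+B+C) \leq \mu_j(A)+\mu_j(B)+\mu_j(C)$.

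First I would record a singular-value bound on $\widetilde K$ itself. Since $K_2$ depends only on $\lambda_0$ and $I+K_2(\lambda_0,h)$ is invertible on $\mathcal{H}_\gamma$ with $\|(I+K_2)^{-1}\|=O(1)$ (from the proof of Proposition \ref{prop:meromorphicextension} with $\Im\lambda_0=T_1h$), Fan's inequality combined with the bounds
\[
\mu_j(K_0),\,\mu_j(K_1) \leq C h^{-3}j^{-1/n}, \qquad \mu_j(K_3) \leq C h^{-2}j^{-2/n^{\sharp}} \text{ for } j>F h^{-n^{\sharp}},
\]
and the trivial norm bound $\|\widetilde K\|_{\mathcal{H}_\gamma\to\mathcal{H}_\gamma} = O(h^{-p_0})$ for some $p_0$, yields a combined estimate of the form
\[
\mu_j(\widetilde K) \leq C h^{-p_0}, \qquad \mu_j(\widetilde K) \leq C h^{-3}j^{-1/n} \text{ for } j \text{ large}.
\]

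Next I would take logarithms and split the sum. Writing $\log M(h) \leq \sup_{\lambda}\sum_{j}\log\bigl(1+\mu_j(\widetilde K)^{n^\sharp+1}\bigr)$, let $j_0$ be the smallest index for which $\mu_j(\widetilde K)\leq 1$; from the polynomial bound this satisfies $j_0 = O(h^{-p_1})$ for some $p_1$ depending only on $n$. For $j \leq j_0$ each summand is bounded by $\log(1+C h^{-(n^\sharp+1)p_0}) = O(\log h^{-1})$, giving a contribution of $O(h^{-p_1}\log h^{-1})$. For $j > j_0$ the inequality $\log(1+x)\leq x$ and $(n^\sharp+1)/n>1$ give
\[
\sum_{j>j_0}\mu_j(\widetilde K)^{n^\sharp+1} \leq C h^{-3(n^\sharp+1)} \sum_{j>j_0} j^{-(n^\sharp+1)/n} \leq C' h^{-3(n^\sharp+1)} \, j_0^{\,1-(n^\sharp+1)/n} = O(h^{-p_2})
\]
for some $p_2$. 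Combining the two regimes yields $\log M(h) = O(h^{-p})$ and hence $M(h) \leq e^{Ch^{-p}}$ as claimed.

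The main subtlety is keeping track of the two different exponents in the singular-value bounds of $K_0,K_1$ versus $K_3$ (one coming from Weyl asymptotics for the Dirichlet Laplacian, the other from hypothesis \eqref{eq:referencehypothesis} on the reference operator), and ensuring that the polynomial counting $j_0 = O(h^{-p_1})$ of singular values that exceed $1$ is uniform for $\lambda \in \Omega(h)$; since $K_0,K_1,K_3$ all have $L^2_\gamma\to L^2_\gamma$ norms that are at worst polynomial in $h^{-1}$ uniformly on $\Omega(h)$ (with the $\lambda$-dependence appearing only through the bounded factors $\lambda^2$ or $\lambda^2-\lambda_0^2$), this uniformity is automatic. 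No optimization of the exponent $p$ is attempted.
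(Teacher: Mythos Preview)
Your approach is correct and essentially the same as the paper's: both use Fan's inequalities to reduce the determinant to a product over singular values of the $K_i$, then exploit the bounds $\mu_j(K_i)\leq Ch^{-q}j^{-\alpha}$ from the previous proposition, splitting into a polynomially-many ``large'' singular values and a convergent tail. The only organizational difference is that the paper factors the product operator-by-operator as $\prod_{i=0,1,3}\prod_j(1+C_0\mu_j(K_i)^{n^\sharp+1})^3$ before estimating, rather than first merging everything into a single decay rate for $\mu_j(\widetilde K)$.

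One small imprecision: your asserted combined bound $\mu_j(\widetilde K)\leq Ch^{-3}j^{-1/n}$ for large $j$ need not hold when $n^\sharp>2n$, since then the $K_3$ contribution $Ch^{-2}j^{-2/n^\sharp}$ decays more slowly than $j^{-1/n}$ and eventually dominates. This does not affect the conclusion---$(n^\sharp+1)\cdot 2/n^\sharp>1$ as well, so the tail sum still converges---but you should either use the slower of the two rates $\min(1/n,\,2/n^\sharp)$ in the tail estimate, or (as the paper does) keep the three pieces separate and bound each product on its own.
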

\begin{proof}
We estimate $M(h)$ using Fan's inequalities:
\begin{align*}
\prod_{j\geq 1}(1+\mu_j(\widetilde{K}^{n^\sharp+1})) &= \prod_{j\geq 1}(1+\mu_j(\widetilde{K})^{n^\sharp+1}) \leq \prod_{j\geq 1}(1+\mu_{3j-2}(\widetilde{K})^{n^\sharp+1})^3 \\ 
&\leq \prod_{j\geq 1}(1+C_0 (\mu_j(K_0)^{n^\sharp+1}+\mu_j(K_1)^{n^\sharp+1}+\mu_j(K_3)^{n^\sharp+1}))^3\\
&\leq \prod_{i=0,1,3} \prod_{j\geq 1} (1+C_0\mu_j(K_i)^{n^\sharp+1})^3.
\end{align*}
For $i=0,1$, the singular values occuring in this product are bounded above by $\mu_j(K_i) \leq Ch^{-3}j^{-1/n^\sharp}$ and so we bound the product by the trace,
\begin{align*}
\prod_{j\geq 1} (1+C_0\mu_j(K_i)^{n^\sharp+1}) &\leq \exp(C_1 h^{-3n^\sharp-3} \sum_{j\geq1} j^{-1+1/n^\sharp}) \\ 
&\leq e^{Ch^{-3n^\sharp-3}}.
\end{align*}
On the other hand for $K_3$, we have
\begin{align*}
\prod_{j\geq 1} (1+C_0\mu_j(K_3)^{n^\sharp+1}) &\leq \prod_{1\leq j \leq Fh^{-n^\sharp}} (1+C_0\mu_j(K_3)^{n^\sharp+1}) \prod_{j > F h^{-n^\sharp}} (1+C_0\mu_j(K_3)^{n^\sharp+1}) \\
&\leq \left(e^{Ch^{-n^\sharp}\log(1/h)}\right)\left(e^{Ch^{-n^\sharp}}\right).
\end{align*}
Thus we get
\[
M(h) \leq e^{Ch^{-p}}
\]
for some $p>0$, where the constant $C$ has dependence only on $\Im\lambda_0$. 
\end{proof}

\subsection{Estimating the determinant from below}
Next we need to estimate $|f(\lambda,h)|$ from below. Note that $\lambda_0$ is not a zero of $f(\lambda,h)$ and that we have
\[
(I+\widetilde{K}(\lambda_0,\lambda_0,h)^{n^\sharp+1})^{-1} = I - \widetilde{K}(\lambda_0,\lambda_0,h)^{n^\sharp+1}(I+\widetilde{K}(\lambda_0,\lambda_0,h)^{n^\sharp+1})^{-1}.
\]
By taking determinants and arguing as in the previous section, we obtain a lower bound at $\lambda_0$,
\[
|f(\lambda_0,h)| \geq e^{-Ch^{-p}},
\]
where the constant again depends only on $\Im \lambda_0$. Since we can bound $|f(\lambda,h)|$ from above by $M(h)$ and from below at a chosen point, we are in a position to employ Cartan's principle \cite[Theorem 11]{Levin:1972} to obtain a lower bound away from resonances.

\begin{prop} \label{prop:lowerbound}
For each $\epsilon > 0$ there exists $C=C(\epsilon)$ such that
\[
|f(\lambda,h)| \geq e^{-Ah^{-p}\log(1/S(h))}, \quad\lambda \in \Omega_\epsilon(h) \backslash \bigcup_j D(r_j(h),S(h)),
\]
where $S(h) \ll 1$ and $\left\{r_j(h)\right\}$ denote the resonances of $P(h)$ in $\Omega_\epsilon(h)$.
\end{prop}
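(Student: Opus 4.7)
The plan is to apply Cartan's theorem on the minimum modulus of a bounded holomorphic function, exactly as hinted by the reference to Levin. The function $f(\lambda,h)$ is holomorphic on $\Omega(h)$, bounded above by $M(h) \leq e^{Ch^{-p}}$ from Proposition \ref{prop:upperbound}, and bounded below at the distinguished point $\lambda_0$ by $|f(\lambda_0,h)| \geq e^{-Ch^{-p}}$. Moreover, the zeros of $f$ in $\Omega(h)$ coincide (with multiplicity) with the zeros of $I+\widetilde K^{n^\sharp+1}$, hence with the resonances $r_j(h)$ after accounting for the standard factor $n^\sharp+1$.

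To apply Cartan's theorem cleanly, I would first normalize by setting $g(\lambda) = f(\lambda,h)/f(\lambda_0,h)$, so that $g(\lambda_0) = 1$ and $|g(\lambda)| \leq e^{2Ch^{-p}}$ on all of $\Omega(h)$. The set $\Omega_\epsilon(h)$ is compactly contained in $\Omega(h)$ with a fixed-size separation (the distance to $\partial\Omega(h)$ being bounded below by a constant depending on $\epsilon$), so I cover $\Omega_\epsilon(h)$ by a finite collection, independent of $h$, of disks $D(\zeta_k, R)$ such that each concentric disk $D(\zeta_k, 2eR)$ lies in $\Omega(h)$. On each such disk, Cartan's theorem (in Levin's form) asserts that for every $\eta \in (0, 3e/2)$,
\[
\log |g(\lambda)| \geq -C\log(1/\eta)\cdot \log \max_{|\zeta-\zeta_k|\leq 2eR}|g(\zeta)| \geq -C'h^{-p}\log(1/\eta)
\]
outside an exceptional set $E_k$ which is a union of disks with sum of radii at most $\eta R$.

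The next step is to choose $\eta$ so that $E_k$ can be absorbed into the exclusion set $\bigcup_j D(r_j(h), S(h))$. By Jensen's formula applied to $g$ on each $D(\zeta_k, 2eR)$, using the upper bound $\log|g| \leq Ch^{-p}$ and $g(\zeta_k) \neq 0$ (or after a harmless shift of center), the number of zeros of $g$ in each disk is at most $C'' h^{-p}$. Thus the total sum of radii of disks of radius $S(h)$ around all resonances in $D(\zeta_k, R)$ is at most $C'' h^{-p}S(h)$. Choosing $\eta = C''h^{-p}S(h)/R$ (which is $\ll 1$ under the hypothesis $S(h) \ll 1$, after possibly requiring $S(h) \lesssim h^p$; otherwise simply take $\eta$ a small absolute constant and the bound is trivial), the exceptional set $E_k$ can be replaced by the union of discs $D(r_j(h), S(h))$ over resonances near $\zeta_k$, at the cost of slightly enlarging $S(h)$ by a factor absorbed in the constant. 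Substituting back yields
\[
\log|f(\lambda,h)| \geq -Ch^{-p} - Ah^{-p}\log(1/S(h)) \geq -A'h^{-p}\log(1/S(h))
\]
for $\lambda \in \Omega_\epsilon(h) \setminus \bigcup_j D(r_j(h), S(h))$, which is the desired bound.

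The main obstacle I anticipate is the bookkeeping in the last step: Cartan produces an abstract exceptional set of disks with controlled total radius, but the proposition requires exclusion of disks specifically centered at the resonances. The standard trick is to note that any disk in $E_k$ which avoids all zeros of $g$ is irrelevant (the lower bound on $|g|$ can be propagated there by the maximum principle or by reapplying Cartan on a smaller scale), so $E_k$ may be replaced by the union of all disks in its description that contain a zero, which in turn can be enlarged to $\bigcup_j D(r_j,S(h))$ provided the number of zeros times $S(h)$ dominates the total Cartan radius — which is exactly the reason for our choice of $\eta$.
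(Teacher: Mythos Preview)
Your overall strategy matches the paper's: use the upper bound $M(h)\le e^{Ch^{-p}}$, the lower bound $|f(\lambda_0,h)|\ge e^{-Ch^{-p}}$, and Jensen's formula for the zero count. The divergence, and the gap, is in how you force the exceptional set to be centered at the resonances.

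Cartan's theorem only produces an exceptional set $E_k$ with \emph{total} radius $\le \eta R$; it says nothing about where the disks sit. Your proposed repair has two problems. First, the choice $\eta = C''h^{-p}S(h)/R$ is based on the condition ``number of zeros times $S(h)$ dominates the total Cartan radius,'' but that comparison of total radii in no way implies $E_k\subset\bigcup_j D(r_j,S(h))$. Concretely, a connected component of $E_k$ containing a zero can have diameter up to $2\eta R\sim h^{-p}S(h)\gg S(h)$, so it need not fit inside any $D(r_j,S(h))$. (Taking $\eta\lesssim S(h)/R$ would fix this and still give $\log(1/\eta)\sim\log(1/S(h))$, so the approach is salvageable.) Second, for zero-free components you invoke ``the maximum principle'' to push the lower bound inward; what is needed is the minimum modulus principle applied to $1/g$, together with a check that the boundary of such a component lies in the good set --- doable for a finite union of disks, but not the throwaway remark you make.

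The paper avoids all of this by \emph{not} applying Cartan directly. It writes $f=\psi\cdot\phi$ on a disk $D_2(h)$ centered at $\lambda_0$, where $\phi$ is the explicit Blaschke product over the zeros $r_j$ in $D_2(h)$, so $\psi=f/\phi$ is zero-free. Carath\'eodory's estimate (Levin, Theorem~8) then bounds $|\psi|$ from below on the smaller disk $D_3(h)$, while $|\phi|$ is bounded below \emph{by construction} outside $\bigcup_j D(r_j,S(h))$, each Blaschke factor contributing at least a factor $\sim S(h)$ there. Jensen's formula gives $N(h)=O(h^{-p})$, whence $|\phi|\gtrsim (cS(h))^{N(h)}$, and the bound follows. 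Finally, the paper covers $\Omega_\epsilon(h)$ by varying $\Re\lambda_0$ with $\Im\lambda_0=T_1h$ fixed, using that the lower bound at $\lambda_0$ is uniform in $\Re\lambda_0$; this also handles cleanly the issue you slide past, namely that Cartan on a disk centered at $\zeta_k$ requires a lower bound at $\zeta_k$, not just at the original $\lambda_0$.
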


\begin{proof}
Rather than applying \cite[Theorem 11]{Levin:1972} directly, we prefer to control the set where the lower bound holds at the expense of the quality of the lower bound, just as in \cite{Petkov:2001}. For the reader's convenience we reproduce the proof, making the necessary adjustments.

Choose $\lambda_0$ with fixed real part. Define radii and disks
\[
\rho_s(h) = T_1 + \gamma - \epsilon_0 - s \epsilon, \quad  D_s(h) = D(\lambda_0,\rho_s(h)), \; s=1,2,3.
\]
We see that $f(\lambda,h)$ is analytic in the disk $D_1(h)$. Let $r_j(h),\, j=1,\ldots,N(h)$ denote the zeros of $f(\lambda,h)$ in $D_2(h)$ including multiplicity and define the Blaschke product
\[
\phi(\lambda,h) =  \frac{(-\rho_2(h))^{N(h)}}{(r_1(h)-\lambda_0)\cdots (r_{N(h)}(h)-\lambda_0)} \prod_j \frac{\rho_2(h)(\lambda-r_j(h))}{\rho_2(h)^2 - (\overline{r_j(h)-\lambda_0})(\lambda-\lambda_0)}.
\]
Then $\phi$ has the same zeros as $f(\lambda,h)$, no poles in $D_2(h)$, and satisfies $\phi(\lambda_0,h)=1$. Moreover, on the boundary of $D_2(h)$ we see that
\begin{equation} \label{eq:philowerbound}
|\phi(\lambda,h)| = \frac{\rho_2^{N(h)}(h)}{|(\lambda_0-r_1(h)) \cdots (\lambda_0-r_{N(h)})|} \geq 1. 
\end{equation}
Since the function defined by
\[
\psi(\lambda,h) = f(\lambda,h)/\phi(\lambda,h)
\]
has no zeros in $D_2(h)$, we may apply Caratheodory's estimate \cite[Theorem 8]{Levin:1972} and \eqref{eq:philowerbound} to conclude that in $D_3(h)$ we have the lower bound 
\begin{align*}
\log |\psi(\lambda,h)|&\geq -\frac{2\rho_3(h)}{\epsilon} \log \left(\sup_{\lambda\in D_2(h)}|\psi(\lambda,h)| \right) + \frac{\rho_2(h)+\rho_3(h)}{\epsilon}\log(|\psi(\lambda_0,h)|) \\
&\geq -\frac{2\rho_3(h)}{\epsilon} \log \left(\sup_{\lambda\in D_2(h)} |f(\lambda,h)| \right)+\frac{\rho_2(h)+\rho_3(h)}{\epsilon}\log(|f(\lambda_0,h)|).
\end{align*}
It therefore suffices to bound $|\phi(\lambda,h)|$ from below in $D_3(h)$. 

Outside the set $\bigcup_j D(r_j(h),S(h))$, the polynomial appearing in the numerator of $\phi(\lambda,h)$ is  bounded below by $(S(h))^{N(h)}$. On the other hand, the polynomial in the denominator of $\phi(\lambda,h)$ is bounded above in $D_3(h)$ by $\rho_2(h)^{N(h)}(\rho_2(h)+\rho_3(h))^{N(h)}$. Therefore
\[
|\phi(\lambda,h)| \geq \left(\frac{S(h)}{\rho_2(h)(\rho_2(h)+\rho_3(h))}\right)^{N(h)}, \, \lambda\in D_3(h)\backslash \bigcup_j D(r_j(h),S(h)).
\]
Moreover, we can apply Jensen's formula to estimate the number of zeros $N(h)$ in $D_2(h)$ by
\begin{align*}
N(h) &\leq \frac{1}{\log\left(\frac{\rho_1(h)}{\rho_2(h)}\right)}\left(\log \left(\sup_{\lambda\in D_1} |f(\lambda,h)|\right)-\log |f(\lambda_0,h)|\right)\\
&\leq  \frac{1}{\log\left(\frac{\rho_1(h)}{\rho_2(h)}\right)} \left( \log M(h) - \log |f(\lambda_0,h)| \right) = O(h^{-p}).
\end{align*}
Combining all the contributions, we obtain 
\[
|f(\lambda,h)| \geq e^{-C h^{-p}\log(1/S(h))}, \, \lambda\in D_3(h)\backslash \bigcup_j D(r_j(h),S(h)).
\]
Since all the constants appearing are uniform in $\Re \lambda_0$, we can vary the real part in $\Omega_\epsilon(h)$ and obtain the necessary lower bound. Of course $\epsilon$ is arbitrary and the result follows.
\end{proof} 

We can now establish our main theorem on resolvent estimates.

\begin{theo} \label{theo:aprioribound}
For each $\epsilon > 0$ there exists $A=A(\epsilon)$ such that
\[
\| R(\lambda,h) \|_{\mathcal{H}_\gamma \rightarrow \mathcal{H}_{-\gamma}} < e^{Ah^{-p}\log(1/S(h))}, \quad\lambda \in \Omega_\epsilon(h) \backslash \bigcup_j D(r_j(h),S(h)),
\]
where $S(h) \ll 1$ and $\left\{r_j(h)\right\}$ denote the resonances of $P(h)$ in $\Omega_\epsilon(h)$.
\end{theo}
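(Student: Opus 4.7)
My plan is to assemble the theorem as a direct corollary of the bounds already established, without any new analysis. The engine is estimate \eqref{eq:resolventestimate}, which expresses the weighted resolvent norm as the product of a polynomially-growing parametrix norm $\|Q_0+Q_1\|_{\mathcal{H}_\gamma\to\mathcal{H}_{-\gamma}} = O(h^{-2})$ and the ratio of a determinant to the modulus of $f(\lambda,h) = \det(I+\widetilde{K}^{n^\sharp+1}(\lambda,\lambda_0,h))$. So the whole task reduces to inserting the upper bound on the numerator from Proposition \ref{prop:upperbound} and the lower bound on the denominator from Proposition \ref{prop:lowerbound}.

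Concretely, I would proceed in three short steps. First, bound the numerator: by Proposition \ref{prop:upperbound}, for all $\lambda\in\Omega(h)$,
\[
\det(I+(\widetilde{K}^*\widetilde{K})^{(n^\sharp+1)/2}) \leq M(h) \leq e^{Ch^{-p}}.
\]
Second, bound the denominator: by Proposition \ref{prop:lowerbound}, for $\lambda\in\Omega_\epsilon(h)\setminus\bigcup_j D(r_j(h),S(h))$,
\[
|f(\lambda,h)| \geq e^{-Ch^{-p}\log(1/S(h))}.
\]
Third, combine these in \eqref{eq:resolventestimate}: on the admissible set,
\[
\| R(\lambda,h) \|_{\mathcal{H}_\gamma \rightarrow \mathcal{H}_{-\gamma}} \leq C h^{-2} \cdot \frac{e^{Ch^{-p}}}{e^{-Ch^{-p}\log(1/S(h))}} \leq C h^{-2} e^{2Ch^{-p}\log(1/S(h))}.
\]

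Finally, since $S(h) \ll 1$ we have $\log(1/S(h)) \geq 1$ for $h$ small, so the prefactor $Ch^{-2}$ is trivially absorbed into the exponential at the cost of slightly enlarging the constant: for any $A>2C$ we obtain $Ch^{-2}e^{2Ch^{-p}\log(1/S(h))} < e^{Ah^{-p}\log(1/S(h))}$ for $h$ small, which is exactly the claimed bound. There is no real obstacle here; the only thing to keep track of is that the constants from Propositions \ref{prop:upperbound} and \ref{prop:lowerbound} depend only on $\Im\lambda_0 = T_1 h$ (which is fixed as a multiple of $h$) and on $\epsilon$, so the final constant $A$ depends only on $\epsilon$, as required. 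The shrinkage from $\Omega(h)$ to $\Omega_\epsilon(h)$ is inherited entirely from the lower bound step.
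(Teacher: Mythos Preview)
Your proposal is correct and matches the paper's own proof, which consists of a single sentence: ``The proof follows immediately by applying Propositions \ref{prop:upperbound}, \ref{prop:lowerbound} to Equation \eqref{eq:resolventestimate}.'' You have simply spelled out in detail the combination of those bounds and the absorption of the polynomial prefactor into the exponential, which is exactly the intended argument.
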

\begin{proof}
The proof follows immediately by applying Propositions \ref{prop:upperbound}, \ref{prop:lowerbound} to Equation \eqref{eq:resolventestimate}.
\end{proof}

\section{From quasimodes to resonances}

The passage from quasimodes to resonances is essentially an argument by contradiction. In the absence of resonances, the exponential bound appearing in Theorem \ref{theo:aprioribound} would hold throughout $\Omega_\epsilon(h)$; combined with the self-adjoint bound in the upper half-plane, an application of the ``semiclassical maximum principle'' implies a resolvent estimate on the real axis that contradicts the existence of a real quasimode. 
 First results in this direction are due to Stefanov--Vodev \cite{Stefanov:1996} who used the Phragm\'en--Lindel\"of principle to show that 
 having high energy real quasimodes implies existence of 
 resonances converging to the real axis. Bounds on the resolvent play a central role in that argument which go back to the work 
 of Carleman \cite{Carleman:1936} on the completeness of sets of eigenfunctions. Tang--Zworski \cite{Tang:1998} replaced the Phragm\'en--Lindel\"of principle with a local version of the maximum principle which showed that there exists a resonance close to each quasimode. Stefanov further refined these method  by dealing with multiplicities \cite{Stefanov:1999}, and modifying the maximum principle to allow the localization of resonances exponentially close to the real axis \cite{Stefanov:2005}.

\subsection{Quasimodes}
Suppose that $u(h)\in\DOM$ satisfies $\| u(h) \| = 1$ and
\[
\supp u(h) \subset K \text{ for a compact set $K$ independent of $h$}.
\]
Suppose further that there exists $\lambda(h)^2 \in \left(a_0,b_0\right)$ such that 
\[
\|(P(h)-\lambda(h)^2)u(h)\| \leq R(h)
\]
for a function $R(h)\geq 0$. We refer to such functions as quasimodes with accuracy $R(h)$. For the resolvent, choose a weight $\varphi$ so that $\varphi \equiv 0$ on $K$. Also choose $\chi_1$ with $\varphi\equiv 0$ on $\supp \chi_1$ and $\chi_1 \equiv 1$ on $K$. Notice that for $\lambda$ in the upper half-plane,
\[
e^{-\gamma\varphi}R(\lambda,h)e^{-\gamma\varphi}(P(h)-\lambda^2)u =e^{-\gamma\varphi}R(\lambda,h)e^{-\gamma\varphi}(P(h)-\lambda^2) \chi_1 u = u.  
\]
and hence this equation holds away from poles by analytic continuation. We also recall the following standard fact: consider the Laurent expansion of $e^{-\gamma\varphi}R(\lambda,h)e^{-\gamma\varphi}$ near a resonance $r(h)$,
\[
e^{-\gamma\varphi}R(\lambda,h)e^{-\gamma\varphi} = \textrm{holomorphic}(\lambda) + \sum_{j=1}^{N} A_j (\lambda^2-r(h)^2)^{-j}.
\]
Then $\mathrm{range}(A_j)\subseteq \mathrm{range}(A_1)$ for $j=1,\ldots,N$. For a very general discussion of these types of results, see \cite{Agmon:1998}. So consider the resonances $r_i(h)$ for $i=1,\ldots,N(h)$ contained in the set $\Omega_\epsilon(h)$, each with the associated residue $A^{(i)}_1$. If $\Pi$ denotes the projection onto $\oplus_i \mathrm{range} (A^{(i)}_1)$, then $(I-\Pi)A^{(i)}_j = 0$ for each $i,j$. Hence 
\[
(I-\Pi) e^{-\gamma\varphi}R(\lambda,h)e^{-\gamma\varphi}
\]
is holomorphic in $\Omega_\epsilon(h)$. By the maximum principle, this operator satisfies the bound given by Theorem \ref{theo:aprioribound} in a set slightly smaller than $\Omega_\epsilon(h)$ (see the proof of \cite[Theorem 1]{Stefanov:1999} or \cite[Theorem 3]{Stefanov:2005} for a precise statement).

\subsection{Semiclassical maximum principle} We now review the semiclassical maximum principle as presented in \cite{Stefanov:2005}.

\begin{lem}
Let $a(h)<b(h)$ and suppose $S_\pm(h), \alpha(h), w(h)$ are functions satisfying
\[
0 < S_+(h) \leq S_-(h), \quad 1 \leq \alpha(h), \quad S_-(h)\alpha(h)\log \alpha(h) \leq w(h).
\]
Furthermore suppose $F(\lambda,h)$ is a holomorphic function defined in a neighborhood of
\[
[a(h)-w(h), b(h)+w(h)] + i[-\alpha(h)S_-(h),S_+(h)].
\]
If
\begin{align*}
|F(\lambda,h)| &\leq e^{\alpha(h)}, \quad\lambda\in [a(h)-w(h), b(h)+w(h)] + i[-\alpha(h)S_-(h),S_+(h)], \\
|F(\lambda,h)| &\leq M(h), \quad \lambda \in [a(h)-w(h), b(h)+w(h)] + iS_+(h),
\end{align*}
with $M(h) \geq 1$, then there exists $h_1 = h_1(S_-,S_+,\alpha) > 0$ such that
\[
|F(\lambda,h)| \leq e^3 M(h), \quad \lambda \in [a(h), b(h)] + i[S_-(h),S_+(h)]
\]
for $h\leq h_1$.
\end{lem}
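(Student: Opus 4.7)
My plan is to view this as a harmonic-measure / Phragm\'en--Lindel\"of estimate on the large rectangle $\Omega = [a-w,b+w] + i[-\alpha S_-, S_+]$, exploiting the fact that the condition $S_-\alpha \log\alpha \leq w$ forces $\Omega$ to be very wide compared to its height $H := S_+ + \alpha S_- \leq 2\alpha S_-$. Setting $u = \log|F|$, which is subharmonic on $\Omega$, the hypotheses give $u \leq \alpha$ on all of $\Omega$ and $u \leq \log M$ on the top edge $T = \{\Im\lambda = S_+\}\cap \partial\Omega$. The case $\log M \geq \alpha$ is trivial (then $|F| \leq e^\alpha \leq M \leq e^3 M$ on all of $\Omega$), so I assume $\log M \leq \alpha$ henceforth. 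Let $v$ be the harmonic majorant on $\Omega$ with Dirichlet data $\log M$ on $T$ and $\alpha$ on $\partial \Omega \setminus T$, so that $u \leq v$ by the maximum principle; writing $v = \log M + (\alpha - \log M)\omega$, where $\omega$ is the harmonic measure of $\partial\Omega \setminus T$, the problem reduces to showing $\omega(\lambda) \leq 3/\alpha$ on the target inner rectangle (interpreted as $[a,b] + i[-S_-,S_+]$), as then $v \leq \log M + 3$ and hence $|F| \leq e^3 M$.

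I would split $\omega = \omega_B + \omega_L + \omega_R$ according to the bottom and two side edges, estimating each piece by the harmonic measure of the corresponding portion of the boundary of a larger, more symmetric domain containing $\Omega$ (a Brownian motion / subdomain comparison). For $\omega_B$, the natural comparison domain is the horizontal strip $\Sigma = \mathbb{R} + i(-\alpha S_-, S_+)$, in which the linear function $(S_+ - \Im\lambda)/(S_+ + \alpha S_-)$ is an explicit harmonic majorant of $\omega_B$; using $S_+ \leq S_-$ and $\Im\lambda \geq -S_-$ gives $\omega_B \leq 2S_-/(\alpha S_-) = 2/\alpha$. For $\omega_R$ (and symmetrically $\omega_L$) I would compare with the half-strip $\{\Re\zeta < b+w\}\cap \Sigma$; under the conformal map $\zeta \mapsto \exp(\pi(\zeta+\alpha S_-)/H)$ to the upper half-disk of radius $R = \exp(\pi(b+w+\alpha S_-)/H)$, the image point $w_0 = \phi(\lambda)$ satisfies $|w_0|/R \leq \exp(-\pi w/H)$, and the standard formula for harmonic measure of the semicircular arc in the upper half-disk yields $\omega_R \leq C\exp(-\pi w/H)$. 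With $H \leq 2\alpha S_-$ and $w \geq S_-\alpha \log\alpha$ this gives $\omega_R \leq C\alpha^{-\pi/2}$, which is $\leq 1/(2\alpha)$ once $\alpha(h)$ is large enough --- this is precisely where the threshold $h_1$ enters. Summing, $\omega \leq 2/\alpha + 1/\alpha = 3/\alpha$, completing the argument.

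The main technical obstacle is the exponential-decay estimate for $\omega_{L,R}$: the bottom contribution is already of size $\sim 1/\alpha$, so the side contributions must be at least that small for the sum to fit into the factor $3$. The quantitative condition $S_-\alpha \log\alpha \leq w$ is engineered precisely so that $\exp(-\pi w/H) \lesssim \alpha^{-\pi/2}$ can be absorbed into the bottom term. A more elementary implementation of the same idea, avoiding the conformal map, is to take as direct harmonic barrier on $\Omega$ the positive harmonic function
\[
\sin\bigl(\pi(\Im\lambda + \alpha S_-)/H\bigr)\bigl[\exp(\pi(\Re\lambda - (b+w))/H) + \exp(-\pi(\Re\lambda - (a-w))/H)\bigr],
\]
but the Brownian/conformal viewpoint gives the cleanest bookkeeping of constants.
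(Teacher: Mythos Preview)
The paper does not actually prove this lemma; it merely states it and cites Stefanov \cite{Stefanov:2005}. Your argument is correct, and it is genuinely different from the proof in \cite{Stefanov:2005} (and the earlier Tang--Zworski version \cite{Tang:1998}). Those proofs proceed by multiplying $F$ by an explicit auxiliary holomorphic function of the form $\exp(iN(\lambda-c)^2)$, chosen so that the product is small on the vertical sides of the rectangle and controlled on the horizontal edges, and then invoking the ordinary maximum modulus principle. Your harmonic-measure formulation is the natural ``soft'' version of the same idea: instead of building the barrier by hand, you let the harmonic measure $\omega$ of $\partial\Omega\setminus T$ do the interpolation, and then estimate $\omega$ by comparison with the infinite strip (for the bottom) and the half-strip (for the sides). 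The two approaches buy the same thing; yours makes the role of the geometric hypothesis $S_-\alpha\log\alpha \le w$ more transparent, since it enters exactly as the condition needed for the exponential decay $\exp(-\pi w/H)\lesssim \alpha^{-\pi/2}$ of the side contributions to beat the $2/\alpha$ coming from the bottom.

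Two small remarks. First, your reading of the target region as $[a,b]+i[-S_-,S_+]$ (rather than the literal $i[S_-,S_+]$, which would be empty when $S_+<S_-$) is the intended one; this is a typographical slip in the statement. Second, your argument needs $\alpha(h)$ large to absorb the side terms into $1/\alpha$; this is exactly what the threshold $h_1=h_1(S_-,S_+,\alpha)$ encodes, and in the application $\alpha(h)=Ch^{-p}\log(1/S(h))\to\infty$, so there is no issue.
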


For our application, we will apply this lemma with 
\begin{itemize}
\item $S_-(h) = S_+(h) = S(h)$,
\item $F(\lambda,h) = (I-\Pi)e^{-\gamma\varphi}R(\lambda,h)e^{-\gamma\varphi}$,
\item $\alpha(h) = Ch^{-p}\log(1/S(h))$,
\item $M(h) = 1/S(h)$.
\end{itemize} 
The choice of $S(h)$ and $w(h)$ is made as in \cite{Stefanov:2005} according to the accuracy $R(h)$ of the quasimodes. 

\subsection{Lower bounds on the number of resonances}
Here we state the main theorem on the existence of resonances rapidly converging to the real axis. We refer to \cite[Theorem 3]{Stefanov:2005} for the proof; the only modification is that instead of a compactly truncated resolvent $(I-\Pi)\chi R(\lambda,h) \chi$ we use $(I-\Pi)e^{-\gamma\varphi} R(\lambda,h)e^{-\gamma\varphi}$.

\begin{theo} \label{thm:existenceofres}
Let $P(h)$ satisfy the black box hypotheses. Let $0<a_0<a(h)<b(h)<b_0<\infty$. Assume there is an $h_0$ such that for $h<h_0$ there exists $m(h)\in\left\{1,2,\ldots\right\},\ \lambda_n(h)^2\in \left[a(h),b(h)\right]$, and $u_n(h)\in\DOM$ with $\|u_n(h)\|=1$ for $1\leq n\leq m(h)$ such that $\supp u_n(h) \subset K$ for a compact set $K$ independent of $h$. Suppose further that
\begin{enumerate} \itemsep8pt
	\item $\|(P(h)-\lambda_n(h)^2)u_n(h)\| \leq R(h)$,
	\item Whenever a collection $\left\{v_n(h)\right\}_{n=1}^{m(h)}\subset \mathcal{H}$ satisfies $\|u_n(h)-v_n(h)\| < h^N/M$, then $\left\{v_n(h)\right\}_{n=1}^{m(h)}$ are linearly independent,
\end{enumerate}
	where $R(h) \leq h^{p+N+1}/C\log(1/h)$ and $C\gg 1,\, N\geq 0,\, M>0$. Then there exists $C_0>0$ depending on $a_0,b_0$ and the operator $P(h)$ such that for $B>0$ there exists $h_1<h_0$ depending on $A,B,M,N$ so that the following holds: Whenever $h\in (0,h_1)$, the operator $P(h)$ has at least $m(h)$ resonances in the strip
\[
	\left[a(h)-c(h)\log\frac1h, b(h)+c(h)\log\frac1h\right] - i\left[0,c(h)\right]
\]
	where $c(h) = \max(C_0BMR(h)h^{-p-N-1}, e^{-B/h})$.
\end{theo}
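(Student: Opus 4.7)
The plan is to argue by contradiction. Suppose $P(h)$ has strictly fewer than $m(h)$ resonances (counted with multiplicity) in the strip
\[
\Sigma(h) = \left[a(h)-c(h)\log(1/h),\, b(h)+c(h)\log(1/h)\right] - i[0, c(h)].
\]
Let $\Pi$ be the finite-rank projection onto $\bigoplus_i \mathrm{range}(A_1^{(i)})$ corresponding to the resonances $r_i(h) \in \Sigma(h)$; then $\mathrm{rank}(\Pi) < m(h)$, and by the residue-range identity recalled in Section 4.1 the operator
\[
F(\lambda, h) := (I - \Pi)\, e^{-\gamma\varphi} R(\lambda, h) e^{-\gamma\varphi}
\]
is holomorphic throughout $\Sigma(h)$ together with a small extension into the upper half-plane.

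Next I would propagate a polynomial bound on $F$ from high in the upper half-plane down to the real axis. By Theorem \ref{theo:aprioribound}, $\|F(\lambda,h)\|_{\mathcal{H}\to\mathcal{H}} \leq e^{Ah^{-p}\log(1/S(h))}$ off the disks $D(r_j(h), S(h))$; since $F$ is holomorphic on $\Sigma(h)$, the maximum principle absorbs the disks around the resonances inside $\Sigma(h)$ at the cost of a slight shrinking of the domain. At height $\Im \lambda = S(h)$ the self-adjoint bound $\|R(\lambda,h)\|_{\mathcal{H} \to \mathcal{H}} \leq |\Im(\lambda^2)|^{-1}$ (together with $\|e^{-\gamma\varphi}\|_{\mathcal{H} \to \mathcal{H}} \leq 1$) yields $\|F(\lambda, h)\|_{\mathcal{H} \to \mathcal{H}} \leq C/S(h)$. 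Choosing $S_\pm(h) = S(h) \sim c(h)$, $\alpha(h) = A h^{-p}\log(1/S(h))$, $M(h) = C/S(h)$, and $w(h) \sim c(h) \log(1/h)$, the semiclassical maximum principle from Section 4.2 (applied to the scalar matrix elements $\langle F(\lambda, h) u, v\rangle$) delivers
\[
\|F(\lambda, h)\|_{\mathcal{H} \to \mathcal{H}} \leq e^3 / S(h), \quad \lambda \in [a(h), b(h)] + i[0, S(h)].
\]

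To close the argument, I would apply this to the quasimodes. The identity $u_n(h) = R(\lambda, h)(P(h) - \lambda^2) u_n(h)$ holds for $\Im\lambda > 0$ and extends by meromorphic continuation as in Section 4.1. Since $\supp u_n(h) \subset K$ with $\varphi \equiv 0$ on $K$, both weights in $F$ act trivially on $u_n(h)$ and on $(P(h) - \lambda^2) u_n(h)$, so taking $\lambda \to \lambda_n(h)$ on the real axis (where $F$ is holomorphic because $\Pi$ already captures any residue of $R$ there) gives
\[
(I - \Pi) u_n(h) = F(\lambda_n(h), h)(P(h) - \lambda_n(h)^2) u_n(h).
\]
Combined with $\|(P(h) - \lambda_n(h)^2) u_n(h)\| \leq R(h)$, this yields $\|u_n(h) - \Pi u_n(h)\| \leq (e^3/S(h)) R(h)$. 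The first branch $c(h) \geq C_0 BM R(h) h^{-p - N - 1}$ is tuned so that this is strictly less than $h^N/M$ for small $h$, so by hypothesis (2) the vectors $\{\Pi u_n(h)\}_{n=1}^{m(h)}$ remain linearly independent. But they all lie in $\mathrm{range}(\Pi)$, a space of dimension less than $m(h)$, a contradiction.

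The main obstacle is the simultaneous calibration of $(S(h), \alpha(h), w(h), M(h))$ so that the hypothesis $S(h) \alpha(h) \log \alpha(h) \leq w(h)$ of the semiclassical maximum principle is satisfied, the resulting bound $(e^3/S(h)) R(h)$ beats the threshold $h^N/M$, and everything stays within the window $\Omega_\epsilon(h)$ where Theorem \ref{theo:aprioribound} applies. The two-branch formula $c(h) = \max(C_0 BM R(h) h^{-p - N - 1}, e^{-B/h})$ is engineered precisely for this balance: the first branch handles generic quasimode accuracies $R(h)$, while the $e^{-B/h}$ branch secures a nontrivial width $w(h) \sim c(h)\log(1/h)$ even when $R(h)$ is exponentially small. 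The auxiliary assumption $R(h) \leq h^{p + N + 1}/(C \log(1/h))$ keeps $c(h)$ itself small enough that all of these constraints are compatible.
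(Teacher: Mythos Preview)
Your proposal is correct and follows essentially the same approach as the paper. The paper itself does not give a self-contained proof but refers to \cite[Theorem~3]{Stefanov:2005}, noting that the only modification needed is to replace the compactly truncated resolvent $(I-\Pi)\chi R(\lambda,h)\chi$ by the exponentially weighted version $(I-\Pi)e^{-\gamma\varphi}R(\lambda,h)e^{-\gamma\varphi}$; your sketch reconstructs precisely that argument with this modification, and your parameter choices $S_\pm(h)=S(h)$, $\alpha(h)=Ah^{-p}\log(1/S(h))$, $M(h)=C/S(h)$ match those the paper records in Section~4.2.
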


\section*{Acknowledgements}
I would like to thank Maciej Zworski for suggesting the problem, along with many valuable conversations. Thanks to the anonymous referee for suggesting improvements in the exposition. I am also grateful to Jeffrey Galkowski for his interest in the problem and some helpful discussions.

\end{document}